 		\documentclass[11pt, twoside, a4paper]{article}
		\usepackage{amsmath,amssymb,amsthm,amscd,appendix,calrsfs,cite,color,epsfig,eucal,enumitem,dsfont,graphics,graphicx,latexsym,mathrsfs,verbatim,hyperref}
		\addtolength{\hoffset}{-.6cm} \addtolength{\textwidth}{1.2cm}
		\addtolength{\voffset}{-.5cm} \addtolength{\textheight}{1cm}
		\pagestyle{myheadings}

\usepackage[english]{babel}

\usepackage{bbm}
\usepackage{amsmath}
\usepackage{amssymb}
\usepackage{amsfonts}
\usepackage{array}
\usepackage{lscape}
\usepackage{rotating}
\usepackage{hyperref}			
\usepackage{amsthm}

\newtheorem{teo}{Theorem}
\newtheorem{prop}{Proposition}%[section]
\newtheorem{cor}{Corollary}
\newtheorem{lem}{Lemma}

\newtheorem{rem}{Remark}

\newcommand{\indicator}[1]{\mathbbm{1}_{#1}}
\newcommand{\sms}{\setminus}
\newcommand{\N}{\mathbb{N}}
\newcommand{\Z}{\mathbb{Z}}
\newcommand{\C}{\mathbb{C}}
\newcommand{\R}{\mathbb{R}}
\newcommand{\D}{\mathbb{D}}
\newcommand{\B}{\mathbb{B}}

\newcommand{\X}{\overline{X}}

\newcommand{\zul}{\underline{0}}
\newcommand{\aul}{\underline{a}}
\newcommand{\bul}{\underline{b}}
\newcommand{\cul}{\underline{c}}
\newcommand{\dul}{\underline{d}}

\newcommand{\sul}{\underline{s}}
\newcommand{\tul}{\underline{t}}

\newcommand{\wul}{\underline{w}}
\newcommand{\uno}{\mathds{1}}

\newcommand{\eul}{\underline{e}}

 \DeclareMathOperator{\re}{Re}
 \DeclareMathOperator{\im}{Im}

\DeclareMathOperator{\ES}{ES}

\DeclareMathOperator{\Cb}{CB}
\DeclareMathOperator{\ExpShr}{Exp}
\DeclareMathOperator{\Conn}{Con}% \DeclareMathOperator{\Cb}{C}
\DeclareMathOperator{\Delt}{Delta}
\DeclareMathOperator{\Path}{Path}
 \DeclareMathOperator{\BV}{BV}
  
\DeclareMathOperator{\ULH}{ULH} \DeclareMathOperator{\Top}{top}
\DeclareMathOperator{\li}{L} \DeclareMathOperator{\Var}{Var}
\DeclareMathOperator{\Pe}{P}
 \DeclareMathOperator{\Sing}{Sing}
\DeclareMathOperator{\rad}{rad}

 \DeclareMathOperator{\ie}{i}
  \DeclareMathOperator{\iie}{ii}
   \DeclareMathOperator{\iiie}{iii}
    \DeclareMathOperator{\ive}{iv}
     \DeclareMathOperator{\ve}{v}
\title{On the existence of a conformal and an absolutely continuous invariant measure for transcendental entire maps}
%\title{A note on the thermodynamics of 
%potentials for%
%a class of of potentials for
%hyperbolic transcendental entire dynamics}
\author{Irene Inoquio-Renteria \\ 
        \small{ICFM Universidad Austral de Chile, casilla 567 Valdivia, Chile} \\ \small{(email: ireneinoquio@uach.cl)} }

\begin{document}
%\title{Existence of an invariant measure for transcendental entire maps}
%\title{A class of potentials for a hyperbolic entire transcendental map}

\maketitle

\begin{abstract}
We identify  a class of hyperbolic transcendental entire maps  
%of finite order, of disjoint type  and satisfying the rapid derivative growth condition, and 
and we prove that some of its elements  generate a class of potentials for which exhibit
 a conformal and invariant probability Gibbs measure. %are proved.  
%we  prove that exist elements in this class that generate  classes of potentials  that  intersects a general class of tame potential studied early  and the difference is a non-empty set. 
The methods  and techniques from the thermodynamic formalism can be extended to this class of potentials.
%Moreover, 
To complement this study
%On the other hand, 
 we highlight that the dynamics of such a map  on some subset of the Julia set  is conjugated to the shift map over a code space with  countable alphabet and the euclidean metric on 
 %the Julia set 
  the complex plane  induces a metric on the symbolic space  which is not compatible with the shift standard metric.  From this fact, 
 we provide a general description of the thermodynamic formalism from   symbolic dynamic outlook, by  studying  
 the shift map acting on a
non-compact and invariant subset 
 of the full shift space with a countably infinite alphabet and  a class of  weakly 
H\"older continuous potentials, to prove the
existence of a conformal and absolutely continuous invariant probability measure.
 
 %in order to study the existence of conformal and invariant probability measure of  the  shift  on some  non-compact subset of the full shift space and for a  class of weakly Holder continuous potentials. 
%we study  noncompact and  invariant subsets of the full shift
%space with a countably infinite number of  symbols that are
%equipped with a  metric which  does not necessarily extend  to the
%shift. For any  weakly H\"older continuous potential we construct
%a conformal  measure and an  absolutely continuous invariant
%measure.
\end{abstract}
\thanks{2010  \emph{Mathematics Subject
Classification}: Primary 37D35, 37B10, 37F10; Secondary 30D30.\\
Keywords: Thermodynamic formalism, Symbolic dynamics, Meromorphic
functions}
\maketitle
%\tableofcontents

\section{Introduction}
Great deal of attention has been paid to the study of the
thermodynamic formalism of certain transcendental maps. %in the recent years.
%Over the last decades, the study of the dynamics of the \emph{transcendental %entire function} and \emph{meromorphics} functions has received great  attention
%transcendental entire  and meromorphic
%functions
%~\cite{De89,De89a,DT86,Er89,Er92,Er84,GK86,St90,Be90,Be91,Be92,Be93,Be95,St91,RS99},\cite{RS05,BKY90,BKY91,BKY91a,RS09,Re03}.
%In the particular case
%of the exponential family, Eλ(z) = λ exp(z), 
In particular, the ergodic theory of the exponential family $E_{\lambda}(z) =\lambda\exp(z)$  has been described
in great detail for a large class of parameters, see ~\cite{McM87,St91a,Ka99,UZ03,UZ04,IS06,Ba07,BK07,CS07,MU10} and references therein.

%Most of difficulties  
When we deal with transcendental entire   function, some difficulties arise in  the study of the thermodynamic formalism, %this type of maps
 %are two fold,
 on the one hand the Julia set
 %set where the interesting dynamics is concentrated, namely the Julia set,
 is never  compact, 
 %as it is for polynomials and rational maps for instance,
 so problems of convergence  appear and the standard arguments such as Schauder-Tychonoff Fixed Point Theorem can not be applied. On the other  hand,  the 
 %system
 dynamics restricted to the Julia set is not Markov, so unfortunately  the symbolic spaces we need to model Julia sets of these maps
 %certain entire transcendental maps
  fall out of  the  framework  developed by Sarig in~\cite{Sa99}  and 
  Mauldin-Urba\'nski~\cite{MU01}, who developed the thermodynamic formalism  for topologically mixing  Markov shift spaces  with infinitely many symbols.

 In the present work, we highlight  a class  of transcendental entire maps (See Defintion in~\S~\ref{Transcendental dynamics}) that includes the exponential family, and we prove that there is a hyperbolic transcendental entire map, which generates a large class of potentials  for which the existence of a conformal measure is guaranteed.  
The main novelty in this paper is the identification of a class of potentials  which is different from the ones studied earlier in the literature by, for example, Mayer and
Urba\'nski \cite{MU10} in the  study of  the thermodynamic formalism (see~Theorem~\ref{t:main}).

To develop a thermodynamic formalism of these maps with these associated potentials (see Theorem~\ref{teo4}) the techniques used in ~\cite{MU10} can be applied without making many changes, however it is worth addressing an alternative approach taking advantage of the properties of their symbolic representation of these maps acting on invariant subsets of the Julia sets, whose code spaces have a natural topology that is inherited from the Euclidean topology.
%To develop a thermodynamic formalism of these maps with these associated potentials, the  techniques used in~\cite{MU10} can be applied  without making many changes, however it is worth taking advantage of the properties of its symbolic representation ofthese maps acting on invariant subsets of the Julia sets,  whose code space has a natural topology  which is inherited from the Euclidean topology.
%Although the techniques used  in~\cite{MU10} can be  applied to this particular family without many changes, it is also important to highlight that  certain potentials  are not being considered in~\cite{MU10}.
%is not being considering, 
So, the second  part of this work is addressed to obtain  results from a symbolic dynamics setting, by means of  %nd for which we can have the   thermodynamical Formalism. %and Multifractal Analysis for M
%Moreover, the setting is something different. Our approach is 
  %to use
  an approximation argument, considering  restrictions of the map to subsets of the Julia set that can be encoded by a full shift on $2N+1$ symbols, with $N$ as large as desired and since that space is compact and Markov the usual results hold and a conformal measure  $\nu_N$ exists. So, the  next step  is to show that the sequence $\{\nu_N\}_N$ is tight and so this sequence  has an accumulation point $\nu$. So  the measure $\nu$  would be  a conformal measure for the original map.  All of these  results 
 are proved for  the shift map acting on  some 
   non-compact  and invariant subset  of the full shift space  with a countably infinite number of symbols,  equipped  with a metric that 
    is not necessarily equivalent to the natural shift metric, therefore for a weakly H\"older continuous potentials  the existence of a  conformal  and absolutely continuous invariant probability measure are proved. %See~Main Theorem in Appendix~A and its proof in \S~\ref{proo}. 
 To do this, 
 %from a symbolic perspective we look at the satisfied properties for the entire transcendental dynamics in \S~\ref{Transcendental dynamics} and  
 we impose some additional conditions on the symbolic space and potentials 
 that are derived from the inherent properties of the transcendental dynamics.
 %in order to  get the thermodynamic properties. 
 See~Theorem~\ref{teo1}, Theorem~\ref{teo2},  Theorem~\ref{teo3}.
 %and their corresponding proofs in  
 %\S~\ref{Appendix} and 
 %\S~\ref{Conf.Meas.} and \S~\ref{Gibbs Measures}.
 
 \medskip

 The paper is organized as follows. In~\S~\ref{Transcendental dynamics} we define the class of transcendental entire maps and  a class of potentials  
 to state and prove the Theorem~\ref{t:main}.
 After gather several properties of the dynamics  in Lemma~\ref{Dynamics Lemma} and properties of potentials in Proposition~\ref{pro: potentials}, the proof of Theorem \ref{t:main} follows from techniques in~\cite{MU10}.
After giving several metric conditions  on the symbolic space, some preliminary Lemmas of the shift map and potentials in \S~\ref{Appendix}, we prove the respective Theorem~\ref{teo1}, Theorem~\ref{teo2} and Theorem~\ref{teo3}.
%imposing on the code space further assumptions to state existence of conformal measure. 

\subsection{Hyperbolic transcendental entire maps}\label{Transcendental dynamics}
%More precisely, 
Given a transcendental entire function $f:\C\rightarrow \C$, the Fatou set $F(f)$ is the subset of $\C$ where the iterates $f^n$ of $f$ form a normal family and its complement is namely called the Julia set, which is  denoted by $J(f)$. 

Denote by $\Sing(f^{-1})$ the set of finite singularities of  the inverse function~$f^{-1},$ which is  the set of critical values (images of critical points) and asymptotic values of $f$ together with  their  finite limit points. 
 The \emph{post-singular} set  $PS(f)$ of $f$ is defined as, 
$$PS(f):= \bigcup_{n=0}^{\infty} f^n\left(\Sing\left(f^{-1}\right)\right),$$
 and  $\rho_f:=\displaystyle\limsup_{z\rightarrow\infty}\frac{\log\log |f(z)|}{\log|z|}$ is namely called \emph{the order of }$f$.
%\begin{defn}

\medskip

Let   $\mathscr{F}$  denote  the class of transcendental entire functions $f$ satisfying the following properties
\begin{enumerate}
\item[1.] It is of \emph{finite order}; \label{1a}
\item[2.] satisfies the \emph{rapid derivative
growth condition}: There are $\alpha_2>0
 $ $\alpha_1>\alpha_2$ and $\kappa >0$  such that for every $z\in J(f)\backslash f^{-1}(\infty)$  we have $$ |f'(z)|\geq \kappa^{-1}|z|^{\alpha_1}|f(z)|^{\alpha_2};$$
%\item[3.] Is hyperbolic: that is  $ J(f)\cap  \overline{PS(f)}=\emptyset,$ and  $\overline{PS(f)}$ is compact,  \label{3a} 
\item[3.] it is of \emph{Disjoint type}, that is, the set $\Sing(f^{-1})$ is contained in a compact subset of the immediate  basin $B=B(z_0)$  of  an attracting fixed point $z_0\in \C$.
(See~\cite{BJR12}) 
%since the boundaries of tracts are disjoint from the boundary of their image
% the boundaries of tracts are disjoint from the boundary of their image
 \label{4a}
 \end{enumerate}
 \medskip
  Note that  each $f\in \mathscr{F}$ belongs to the \emph{Eremenko-Lyubich} class 
$$
\mathcal{B}:=\{f:\C\rightarrow \C:\Sing(f^{-1}) \textrm{ is bounded}\}.
$$
It was proved in~\cite{Er92} that for $f\in \mathcal{B}$ all the Fatou components of $f$ are simply connected. Hence the immediate basin $B$ is simply connected. Moreover each $f\in \mathscr{F}$ is hyperbolic
 in the sense that the closure  of
$\overline{PS(f)}$ is disjoint from the Julia set  and 
$\overline{PS(f)}$ is compact. We have that  $f$ has no wandering  and Baker domains, so $B$ is the only Fatou component of $f$, see~\cite{Er89,Er92,GK86}.
%Transcendental entire functions may have  wandering domains, see~\cite{Ba76,Ba84,Ba85,Ba87}. 
\medskip 

  Examples in the class $\mathscr{F}$ include the family
$\lambda \exp(z)$ for $\lambda\in(0,1/e)$, the family of maps $\lambda \sin(z)$ for $\lambda\in(0,1),$ and 
 $\lambda g(z)$, where $\lambda\in\C\backslash\{0\}$  and
$g$ is an arbitrary  map of finite order such that 
$\Sing(g^{-1})$ 
is bounded and $\vert \lambda\vert$ is enough small, other examples are  the expanding entire maps 
 $\sum_{j=0}^{p+q} a_j e^{(j-p)z},$
  $p,q>0$, $a_j\in\C$, studied early in~\cite{CS07}.

  \medskip
 
%We  understood that the dynamics of $f$ on a some subset of the Julia set  is conjugated to the shift map over a countable alphabet. We give a brief description of that subset of the full shift space, which is non-compact and  invariant by the shift map. 

\subsubsection*{Potentials}
% \subsection*{\textbf{Symbolic representation}}\label{simbolic}
Fix $f\in \mathscr{F}$. 
%let us describe  a subset of $J(f)$ whose  symbolic space is a non-compact  invariant subset of the full shift space with a countably infinite number of symbols, so that the dynamics of $f$ on that subset of $J(f)$ is conjugated to the shift map over $X$.
% and we will 
%equip $X$ with a metric which is not necessarily equivalent to the natural shift metric. 
%\\
Since the immediate attraction basin $B=B(z_0)$ of an 
 attracting  fixed point $z_0$ is simply connected, there exists
 a bounded simply connected domain $D\subset \C$, such that  its closure  $\overline{D}\subset B$, and boundary $\partial  D$ is an
 analytic  Jordan  curve. Moreover,  $\overline{\Sing(f^{-1})}\subset  D$
 and $\overline{f(D)}\subset D$, for more details see~\cite[Lemma~3.1]{BK07}. 
 Following~\cite{BK07}, the pre-images of
$\C\backslash \overline{D}$ by $f$ consists of countably many unbounded connected components called  
   \emph{tracts} of $f$.
    We denote the collection of all these tracts by 
 $\mathscr{R}$.

 Since the closure of  each tract is simply connected, there exists
an open  simple arc
$\alpha:(0,\infty)\rightarrow \C\backslash \overline{D},$ which is 
disjoint from the union  of  the  closures of all tracts  and such that
 $\alpha(t)$ tends to a point
  of $\partial D$ as $t$ tends to $0^+,$ and $\alpha(t)$ tends to $\infty$ as $t$ tends  to $+\infty.$ We use this curve to define the fundamental domains on each tract as follows: since
  %Furthermore, 
  for every $T\in \mathscr{R}$ the map $f|_{T}$ is a cover of $\C\backslash \overline{D}$, we have 
 $T\backslash f^{-1}(\alpha)$ is the union of infinitely  many disjoint simply connected domains $S$ such that  the function
 $$f|_{S}:S\rightarrow\C\backslash (\overline{D}\cup \alpha)$$ is
 bijective. 
  Given $T\in\mathscr{R}$, we denote  by $\mathcal{S}_T$ the
collection of connected components of $T\backslash
f^{-1}(\alpha)$. The elements of
\begin{equation}\label{S}
\mathcal{S}:=\bigcup_{T\in\mathscr{R}}\mathcal{S}_T
\end{equation}
are 
%namely 
called fundamental domains.

\vspace{0.5cm}
%\marginpar{parar}

 \noindent For each $S\in \mathcal{S}$, we have that the restriction $f\vert_S$ is univalent, so we denote its inverse  branch by
  $
g_S:=(f|_{S})^{-1}: \C\backslash (\overline{D}\cup
\alpha)\rightarrow  S.
 $
 For  $n\geq 1$ and each $j\in \{0,1,\cdots, n\}$  denote by $S_j$
an element of $\mathcal{S}$
  and put
$g_{_{S_0S_1\cdots S_n}}=~
 g_{_{S_0}}\circ\cdots\circ
g_{_{S_n}}.$
 Then,
   \begin{align}\label{equ_g}
      g_{_{S_0 \cdots S_n}}(\C\backslash (\overline{D}\cup
\alpha))&=\left\{z\in \C: f^j(z)\in S_j, \text{ for every }
j=0,\cdots,
    n\right\}.  
        \end{align}
 %\begin{multline}\label{equ_g}
  %      g_{_{S_0 \cdots S_n}}(\C\backslash (\overline{D}\cup
%\alpha))=\left\{z\in \C: f^j(z)\in S_j, \text{ for every }
%j=0,\cdots,
 %   n\right\}.
  %  \end{multline}
 For each sequence $\underline{S}=(S_0 S_1\cdots)\in \mathcal{S}^\N$, let
 $\displaystyle K_{\underline{S}}:=\bigcap_{n=0}^{\infty} g_{_{S_0 S_1\cdots S_n}}(\C\backslash (\overline{D}\cup~
\alpha)).$
   Then,
the Julia set of $f$ is given  by  the disjoint union of $K_{\underline{S}} $, that is  
$$ J(f)=~\displaystyle \bigsqcup_{\underline{S}\in \mathcal{S}^{\N}}
   K_{\underline{S}}.$$
   
 %Given $f\in \mathscr F$, 
 Since  
 %$f\in \mathcal B$
 $f$ has finite order and of disjoint-type, following 
 \cite{RRRS11},
 the Julia set $J(f)$ is a Cantor bouquet, that is a union of uncountably many pairwise
disjoint curves tending to infinity (hair) and each curve is attached to the unique  point accessible from  the immediate basin $B$, called endpoint of the hair. More precisely,
%Given  a function $f\in \mathscr{F}$, for each 
%$\underline{S}\in \mathcal{S}^{\N}$
either
$K_{\underline{S}}$ is empty or there is a homeomorphism
$h_{\underline{S}}:[0,+\infty)\rightarrow K_{\underline{S}}$ such
that $\lim_{t\rightarrow+\infty}h_{\underline{S}}(t)=~\infty$, and
such that  for every  $t>0 $ we have
$\lim_{n\rightarrow+\infty}f^n(h_{\underline{S}}(t))=\infty.$ In
the latter case $z_{\underline{S}}:= h_{\underline{S}}(0)$ is the
only point of $K_{\underline{S}}$ accessible~\footnote{ If  $U$ is simply connected domain in the Riemann sphere $\overline{\C}$, we say that a point
 $z\in \partial U$ is \emph{accessible} from~$U$ if there
 exists  a  curve $\upsilon:[0,\infty)\rightarrow U$ such
 that $\lim_{t\rightarrow \infty}\upsilon(t)=z$.} from the immediate
basin $B$.  See also~\cite{Ba07}, 
which  generalizes previous results
for the exponential map having an attracting fixed point of~\cite{DG87}.

\medskip

%\subsection*{Potentials}

%Let  $f\in\mathscr{F}$, and 
 %let
%In order to do that, let 

Let $\rho_f$ be the order of $f$ and  
 $\alpha_1,\alpha_2>0$ be the corresponding
constants of the rapid derivative
growth condition of $f$.
 Fix $\tau\in (0,\alpha_2)$ and let
$\gamma: \C\backslash\{0\}\rightarrow \R\cup\{\infty\}$  defined
by $\gamma(z)=\frac{1}{|z|^\tau}$. Let 
   $\theta$ be  the   Riemaniann metric
 on $\C\backslash\{0\}$ defined by
 $$d\theta(z)=\gamma(z)|dz|,$$
 and we  derive $f$  with respect to  $\theta$
  instead of the Euclidean  metric. So,  for each $z\in \C\backslash\{0\}$  we have
 \begin{equation}\label{eq:derivada}
|f'(z)|_{\theta}= |f'(z)|\frac{\gamma\circ f(z)}{\gamma(z)}=
 |f'(z)|\frac{|z|^\tau}{|f(z)|^\tau}.
\end{equation}
 Denote by  $\mathscr{C}$ be the set of
 functions  $\psi$ from
$\bigcup_{S\in\mathcal{S}}S$ to $\R^+$ that are bounded   from
above and are
  constant  on each element of $\mathcal{S}$. 
  
  \begin{equation}
  \mathscr{C}:=\left\{\psi:\bigcup_{S\in\mathcal{S}}S\rightarrow \R^+: \psi \textrm{ is bounded from above and constant over  each }  S\in \mathcal{S}\right \}
  \end{equation}

\medskip 

Let us define the following class of potentials
 for $f$:
\begin{equation*}
\mathscr{P}_f=\left\{\varphi_{\psi,t}(z)=\log
\psi(z)-t\log|f'(z)|_\theta,  \psi\in \mathscr{C},
t>\frac{\rho_f}{\alpha_1+\tau}\right\}.
\end{equation*}
Observe that this class contains potentials  $-t\log\vert f'\vert_\theta$, which  from~(\ref{eq:derivada}) are cohomologous to $-t\log \vert f'\vert$.
\vspace{0.2cm}

For each $f\in \mathscr{F} $ we denote  by $\mathscr{T}_f$ the class of tame potentials stated in~\cite{MU10}, that is
$$
\mathscr{T}_f:=\left\{\varphi= h-t\log \vert f'\vert_{\theta};\, h \textrm{ is bounded weakly H\"older function, }t>\frac{\varrho'}{\alpha_1+\alpha_2}\right\}
$$

Although the class $\mathscr{F}$ does not include most of the functions  
 considered in~\cite{MU10},  our first result highlights that 
the class  of potentials $\mathscr{P}_f$ determined by  $f\in \mathscr{F}$ intersects the class of potentials  considered in~\cite{MU10},  which the difference is non-empty. So the first result is the following
\begin{teo}\label{t:main}
There exists $f\in \mathscr{F}$ such that $\mathscr{P}_f\cap \mathscr{T}_f\neq \emptyset$ and $\mathscr{P}_f\backslash \mathscr{T}_f\neq \emptyset$.
\end{teo}

%and  \cite{RRSR11}
%Günter Rottenfußer, Johannes %Rückert, Dierk Schleicher and myself (Ann. of Math., 2011) 
%for an strongest version. 

%, each of which connects a distinguished point in the plane
%to ∞

 %   for each $\underline S\in  \mathcal{S}^{\N}$ such that %$K_{\underline{S}}\neq \emptyset$,  the set $K_{\underline{S}}$ is a curve %tending to infinity~(hair) attached to the unique

\subsubsection*{Symbolic representation }
%and properties}
%\section{Non compact shift space with countably many symbols}
%Euclidean metric on a subset of the shift symbolic space}

Let 
    $\Sigma_{\Z}:=\{\sul=(s_0s_1\ldots):s_j\in\Z, \text{ for all } j\geq 0\}$
   be the full shift space, and the shift metric is defined as follows, for    %with the product topology, 
   for some $\theta\in(0,1)$, 
\begin{equation}\label{metri}
d(\sul,\tul)=\theta^{\inf\{k: s_k\neq t_k\}\cup\{\infty\}}.
\end{equation} 
For every $n\ge 1$, we denote a
finite word $s_0\cdots s_{n-1}$ in~$\Z^n$  simply by $s^*$, so we follows the following notation for cylinders
$$[s^*]=\{\underline{w}\in \Sigma_{\Z}:w_i=s_i, 0\leq i\leq n-1\}$$
 %is namely called cylinder, in particular for 
  and for $s\in\Z$, we simply denote
$[s]=\{\underline{w}\in \Sigma_{\Z}:w_0=s\}.$ Let  $\sigma:\Sigma_{\Z}\longrightarrow \Sigma_{\Z}$ be the left-sided shift map, given by  $\sigma(s_0s_1\cdots)=(s_1s_2\cdots).$

Observe that by definition the set $\mathcal{S}$ given in (\ref{S}) is countably infinite, so we identify
$\mathcal{S}$ with $\Z$. Put
\begin{equation}\label{eq:the set X}
X:=\{\underline{S}\in \mathcal{S}^\N:
K_{\underline{S}}\neq\emptyset\}\subseteq \Sigma_{\Z}.
\end{equation}
Let
$$
 Z=\bigcup_{\underline{S}\in X}
 K_{\underline{S}}.$$
  From~(\ref{equ_g}), we have  for each $\underline{S}\in \mathcal{S}^\N$, 
 $f(K_{\underline{S}})= K_{\sigma(\underline{S})}$, then   the function $f$ on the Julia set $J(f)$  is semi-conjugate  to
  $\sigma$ on  $X$, however $f$ on the set
       $$\mathcal{EP}:=\{z_{\underline{S}}=
  h_{\underline{S}}(0):
\underline{S}\in X\},$$
 %$f|_{\mathcal{EP}}$ 
 is
conjugate to $\sigma\vert_X$. Hence the set  $X $ is completely
$\sigma$-invariant.
\medskip

The set  $\mathcal{EP}$ defined above, is the set of \emph{endpoints} of  hairs $K_{\underline S}$ and it 
%is interesting because 
satisfies the following properties, it is the set of accessible points from the immediate attraction basin $B.$ It  is
totally disconnected,  however $\mathcal{EP}\cup\{\infty\}$ is connected, 
see~\cite{BJR12}
%see~\cite{BK07}. 
%({\bf Rempe- Baranski and Jarque, “Brushing the hairs of transcendental entire functions”, where it is proved that the Julia sets of finite-order disjoint-type functions are Cantor Bouquets, hence (ambiently) homeomorphic to the Julia set of the disjoint-type exponential maps.})
%\marginpar{señalar paper de baranski kotus rempe}
Moreover 
% not closed, but it is dense in the Julia set. Moreover 
%besides being the set of accessible points is also interesting because it %is not closed, but it is dense in the Julia set.  
 following~\cite{Ba08},  the  Hausdorff dimension of this set is equal to two, generalizing  previous results of Karpi\'nska~\cite{Ka99} for the exponential map  $f_\lambda(z)=\lambda e^z$ with  parameters $\lambda\in (0,1/e) $. This %hyperbolic 
 exponential map is probably the best known example in the family $\mathscr{F}$, its Julia set is a 
%In particular for hyperbolic exponential maps the Julia set is a 
Cantor bouquet % that is  a collection  of hairs, which  each hair consists of an endpoint together with a tail that extends to infinity  on the right half plane. 
 and the set of endpoints is modelled by the symbolic space of all allowable sequences, see~\cite{DK84} and~\cite{DG87}.
% orresponds to the set of all allowable sequences in its Julia set, see~DEVANEY.\\

\medskip

%\vspace{0.2cm}
%\subsection*{Euclidean metric on the symbolic space $X$}
In the following, we state  some properties concerning to the dynamics $(X, \sigma\vert_X)$, endowed with a metric  inherited from the euclidean metric
%hich is 
%note that  
%by  the  euclidean metric
on $J(f)$.
%define a metric on $X$, 
It does
%induces a metric on $X$ which does 
not necessarily generate the topology induced by the cylinder sets. 
  %so since   %the
%Since the metric $\varphi$ is induced  by  the euclidean metric on $J(f)$, and 
%$f$ is continuous, then 
%so some some properties of the space $(X, \sigma\vert_{X})$.
 %endowed with that metric and the shift transformation acting on $X$.
 
%\marginpar{Ver sugerencia de Rempe}	
Let $H: X\times[0,+\infty)\rightarrow J(f),$ and
$H|_{X\times\{0\}}: X\times\{0\}\rightarrow \mathcal{EP}$
defined by $H(\sul,0)=h_{\sul}(0)$, we have  that $H$ induces a  metric $\varrho$  on $X$ 
$$\varrho(\sul,\wul):=\vert h_{\sul}(0)-h_{\wul}(0)\vert.$$
%Indeed,  this  metric induced   by the euclidean metric on $J(f)$ does not necessarily generate the topology induced by the cylinder sets.
  %For more details over the properties of the metric $\varrho$ see Section %$\S~...$
 % of $f$.%
 %\in\mathscr{F}.$
  The shift map $\sigma:X\rightarrow X$ is continuous respect to $\varrho$.
  %, moreover
%due to the continuity of $f$. Moreover  $\sigma$ can be 
%It is easily seen that the shift map  $\sigma:X\rightarrow X$ is %continuous  respect to the metric $\varrho$, due to the continuity of  %$f$. So   the map 
%$\sigma$ on $X$  can be
 %continuously extended to its completion  of $X$ respect to $\varrho$.
 %$\sigma:\overline{X}^{\varrho}\rightarrow \overline{X}^{\varrho}$, 
%{\color{red} This set $Z$ represent symbolically the  Julia set~$J(f)$.}

\medskip

  Given $\sul\in\Sigma_{\Z}$  and $w^*\in~\Z^n$ let us write
 $
w^*\sul=(w_0\cdots w_{n-1}s_0 s_1\cdots).
 $
 For  a  set $A\subset \Sigma_{\Z}$ write $w^*A=\left\{w^*\sul: \sul\in
 A\right\}$. For $\sul\in X$ and $\delta>0 $ we define  the following sets with respect to the metric $\varrho.$
$$B(\sul,\delta):=\big \{\wul\in X:\varrho(\sul,\wul)<\delta\}=\big \{\wul\in X:\vert h_{\sul}(0)-h_{\wul}(0)\vert<\delta\big \},$$
$$  
\overline{B}(\zul,\delta):=\{\sul\in X: \varrho(\sul,\zul)\leq \delta\}.$$
  %If it does not cause misunderstanding we denote the ball of centered $w\in J(f)$ and radius $\delta $  by $B(w,\delta)$.
  
 % For $\sul\in X$ and $\delta>0 $ we denote  
%$B(\sul,\delta):=\big \{\bul\in X:\varrho(\sul,\bul)<\delta\big \}$,  
%   $\overline{B}(\zul,R)=\left\{\tul\in X: \varrho(\zul,\tul)\leq R \right\}$ and 
    $$\B_0(\sul,\delta):=\big \{\wul\in X:\varrho(\sul,\wul)<\delta \& b_0~=~a_0\big \}.$$
     For every $n\geq 1$ and  $\sul\in X$ define
  
$$
\B_n(\sul,\delta):=\left \{\wul\in X:\sigma^j(\wul)\in
 \B_0(\sigma^j(\sul),\delta), \textrm{ for  all }
j=0,1,\cdots,n\,
 \right\}.$$

% \subsection{Properties of the metric space $(X,\varrho)$}

%\section{Properties of the metric space $(X,\varrho)$ }

\medskip

%{\color{red} El conjunto X no es compacto pero puede ser aproximado por una sucesipon creciente  de  subconjuntos compactos e invariantes}
%\marginpar{hablar de $X$}
The set $X$
%=\{\underline{S}\in \mathcal{S}: K_{\underline{S}}\neq \emptyset\}\subseteq \Sigma_{\Z}$,
endowed with the metric $\varrho$
%, and let 
% stated in section~\S~$1$ 
 %and let
 %$$Z=\bigcup_{\underline{S}\in X}K_{\underline{S}}$$
  is non-compact, however it can be approximated by a increasing sequence of compact and invariant subsets.
  %there exists  a family of compact subsets $(\Sigma_N)_{N\geq 1}$ such that   for every $N\geq 1$, we have  $\Sigma_N\subset X$, $\Sigma_N\subset \Sigma_{N+1}$ and $\bigcup_{N\geq 1}\Sigma_N$ is dense in $X$.
  Indeed, for all $N\ge 1$,  define 
%For each  positive integer  $N$ we denote the set 
 \begin{quote}
$\Sigma_N:=\{\sul=(s_0s_1\cdots )\in X: \textrm{ for } j\geq 0, s_j
 \in \{-N,\cdots, N\}\},$
\end{quote}   
 so, the following holds

 \begin{lem}\label{Dynamics Lemma}
 \begin{enumerate}
     \item For all $N\ge 1$,   
  $\Sigma_N\subset X$,  $\Sigma_N$ is compact with respect to $\varrho$ and invariant by $\sigma$. Moreover,  for each compact subset $\Lambda$ of $X $ with respect to the metric $\varrho$, so that $\sigma(\Lambda)\subset \Lambda$, we have, there exists $N_0\geq 1$, such that  $\Lambda \subset \Sigma_{N_0}$.

\item 
There exists $\delta_0$ such that the following condition holds
%\begin{equation}\label{eq:expan}
%\end{equation}
 \begin{quote}
    There exist $C>0$ and $\lambda>1$ such that for every $n\in \N$ and $\sul, \tul\in X$ and $u^*\in \Z^n$, if $\varrho(\sul,\tul)<\delta_0$ then we have 
  $$\varrho(u^*\sul,u^*\tul)\leq C \lambda^{-n}\varrho(\sul,\tul).$$
  \end{quote}
 
\item For every $R>0$ there exists $n\geq 1$ such that  for every $\sul\in B(\zul,R)$, we have 
$
\sigma^n(B(\sul,\delta_0))\supset B(\zul,R).
$ Thus $(X,\sigma) $ is topologically mixing

\item The set
  $\bigcup_{N\geq 1}\Sigma_N $ is dense in $X.$

 \end{enumerate} 
 \end{lem}

 \medskip
 
 For the next theorem, we  remark about the  definition  of  a conformal measure stated in \ref{Conf.Meas.}.
%Let $T:Y\rightarrow Y$ be a measurable  map on a measurable space. 
 % If $g:Y\rightarrow \R$  is a 
  %     a non-negative  measurable function. A probability measure $\mu$ on $Y$ is called  $g$-\emph{conformal} for $T$, if for every  measurable set $A$ which $T(A) $ is measurable and $T\vert_A$  is injective  we have
 %$$
%\mu(T(A)) =\int_A g d\mu. 
 %$$
%\end{defn}
%\begin{defn}(Tightness)
%A family $\{\nu_N\}$ is \emph{tight} if for every $\epsilon$ there exists a compact set $K\subset Y$ such that  for every $N$ we have 
%$$\nu_N(Y\backslash K)<\epsilon$$
%\end{defn}
In \S~\ref{Gibbs Measures}, we give the definition of a Gibbs measure of
potentials appropriately adapted
 for the transcendental functions.
 %in \S.
%\begin{defn}(Gibbs measure)

%\end{defn}

%\begin{defn}(Prokhorov's theorem~\cite{Bi99})
%If $Y$ is a Polish space, and $M(Y)$ is the space of all Borel probability measures in $Y$, then every tight family measures from $M(Y)$ is precompact subset of $M(Y)$
%\end{defn}

%\section{Conformal and invariant measure }\-
\medskip

%\subsection*{Conformal and invariant measures of some entire
%transcendental maps }\label{subs2} 
\begin{teo}\label{teo4}
Let $f\in
%be an entire transcendental map in
\mathscr{F}$.
%in  satisfying conditions
%$(1), (2), (3)$ and $(4).$
 Then for every  potential $\phi\in \mathscr{P}_f$ we have the
 following properties.
\begin{enumerate}
\item  There exists  a unique $e^{P(\phi)-\phi}$-conformal measure
$\nu_\phi$ of $f$.
 \item There exists  a unique  probability   Gibbs
state $\mu_\phi$. That is,  $\mu_\phi$ is  $f$-invariant  and
equivalent to $\nu_\phi$. Moreover, both measures are ergodic  and
supported on the radial Julia set $J_r(f)$, where
$$J_r(f)=\{z\in J(f): \lim_{n\rightarrow\infty} f^n(z)< \infty \}.$$
% \item The density $\psi= \frac{d\mu_\phi}{d \nu_\phi}$ is a continuous and
% bounded  function  on the Julia set  $J(f)$.
 \item For each $w\in J(f),$ we have
  $ P(\phi)=\displaystyle
 \lim_{n\rightarrow\infty}\frac{1}{n}\log\sum_{z\in f^{-n}(w)}\exp\left(\sum_{j=0}^{n-1}\phi\circ f^{j}(z)\right).$
 %([$Definition~$5.9$,
% exists and  is independent of  point $w\in J(f)$.
\end{enumerate}
\end{teo}
%\begin{rem}
%\end{rem}
%\begin{enumerate}
%\item[-]

%\begin{teo}
%Let $\sigma:X\rightarrow X $ be the shift map on 
%$$X=\left\{\sul\in \Sigma_{\Z}: %K_{\sul}\neq \emptyset\right\}$$ and let $\phi$ be a weakly H\"older continuous potential satisfying conditions (\ref{sumable}), (\ref{eq:Cond.Potential2}), (\ref{decreasing})Then there exists a conformal measure $m$ and absolutely continuous invariant measure $\nu$ equivalent to  $m$ supported on 
%$X_{\rad}$.
%\end{teo}

 %\begin{teo}
%Let $\sigma:X\rightarrow X $ be the shift map on 
%$$X=\left\{\sul\in \Sigma_{\Z}: %K_{\sul}\neq \emptyset\right\}$$ and let $\phi$ be a weakly H\"older continuous potential satisfying conditions.... Then there exists a conformal measure $m$ and absolutely continuous invariant measure $\nu$ equivalent to  $m$ supported on 
%$X_{\rad}$.
%\end{teo}

\section{Proof of results}

\subsection{Proof of Theorem~1}\label{uno}%
%\begin{proof}
 Consider  the exponential family $\left\{f_\lambda(z)= \lambda e^z, \lambda\in (0,1/e)\right \}$. Each  $f_\lambda$ belongs to $\mathscr{F}$, because  has order equal to $1$,  satisfies the
rapid derivative growth condition 
 with $\alpha_1=0$ and $\alpha_2=1$, and since $0$ is   the only singular value of $f_\lambda$, so this set is hyperbolic.
  Moreover  the potentials $-t\log\vert z\vert= -t\log\vert f'_\lambda(z)\vert+ \log \gamma_1-\log \gamma_1\circ f_\lambda$, where $\gamma_1=\vert z\vert^{-t}$, are tame potentials and   also 
 belong to the class $\mathscr{P}_{f_\lambda}$.

On the other hand, 
%the difference between these two classes of potentials with respect to $\mathscr{P}_{f_{\lambda}}$ is non-empty. In fact,
let 
 $\D$ be the open unit  disk in $\C$,  then 
 $$f_\lambda\left(\left\{z: \re z< \ln
\left(\frac{1}{\lambda}\right)\right\}\right)=
\D\backslash\{0\}, $$ and since $1<\ln(\frac{1}{\lambda})$ we have 
   $\overline{f_\lambda(\D)}\subset
 \D$.
 Moreover since the immediate basin $B$ of the attracting fixed point   is the  only Fatou  component of $f_\lambda$  we have  $\overline{\D}\subset B$. 
 
Since $f_{\lambda}^{-1}(\C\backslash \overline{\D})=\left\{z: \re z>\ln\left(\frac{1}{\lambda}\right)\right\},$  the  only tract of $f_\lambda$ is the half plane $T=\left\{z: \re
 z>\ln\left(\frac{1}{\lambda}\right)\right\}$.
Let us consider the ray $\alpha:(0,\infty)\rightarrow \C\backslash
\overline{\D}$ defined by
 $\alpha(t)=-(1+t)$, then 
 $$
f_\lambda^{-1}(\alpha(0,\infty))= \bigcup_{k\in\Z}\left\{x+(2k-1)\pi i: x>
\ln\left(\frac{1}{\lambda}\right)\right\}, 
$$ 
and  for each  $k\in\Z$,  put
 $S_k:=\left\{z: \re z>\ln\left(
\frac{1}{\lambda}\right), (2k-1)\pi<\im z<(2k+1) \pi \right\}.$
Then $T\backslash f_\lambda^{-1}(\alpha(t))$  is the disjoint
union of the  fundamental domains   $S_k.$

 Following~\cite{IS06},   let
  $c: J(f_\lambda)\rightarrow \R^+$ be a function such that for
each  $k\in \Z$, is constant  on $J(f_\lambda)~\cap~
(S_{-k}\cup~S_k)$  and we denote by $c_k$ its value on this set. Furthermore we assume that the sequence $(c_k)_{k\in \Z}$ of
positive numbers satisfies
\begin{equation}\label{eq:c_k1}
  \lim_{k\rightarrow \infty}\frac{\log c_k}{\log k}=-\infty.
 \end{equation}
Define
 $\varphi(z):=\log\left(c(z)|z|^{-t}\right),$  where $t>0$, 
 $c(z)=c_k$ if $z\in S_{-k}\cup S_k$ and the sequence
$(c_k)_{k\in \Z}$ satisfies~(\ref{eq:c_k1}).
 Observe that any potential  as above $\varphi(z)=\log (c(z)|z|^{-t})$
satisfies $\lim_{k\rightarrow+\infty}c_k\rightarrow 0,$  so, 
$\varphi$ is not a tame potential however   this potential 
 belongs to the class $\mathscr{P}_{f_\lambda}$ because the 
 function $c$ is bounded on each $S_k$ and 
  $|E'_\lambda(z)|_\theta=|z|$. 

%\end{proof}

\subsection{Proof Lemma~\ref{Dynamics Lemma}}

%\begin{proof}
% {\bf Denjoy-Carleman-Ahlfors theorem}
 
1. 
By  Denjoy-Carleman-Ahlfors theorem implies 
% Theorem~\cite{Ha89},
that the transcendental entire functions of finite order have only a finite numbers of tracts. We will 
 assume for simplicity that for $f\in \mathscr F$ there is only one tract $T$,  and  there is no complication in the generalization to a finite number of tracts.

Let $N\geq 1$ and  denote by $\mathcal{S}_N$ the union of $2N+1$ fundamental domains in $T$, that is 
$$\mathcal{S}_N= \bigcup_{k=-N}^{N}S_k$$
and define
      $$K_N:=\{z\in \mathcal{S}_N: \textrm{ for every }j\geq
 0, f^j(z)\in \mathcal{S}_N\}.$$ 
 %and for $R\geq 1$
 %$$\mathcal{EP}_N\vert_{B(z_0,R)}:=\{z_{\sul}=h_{\sul}(0)\in K_N: \sul\in \Sigma_+^{N}\}$$
 
 Let 
   $(\sul^m)_{m\geq 1}$ be a sequence in $\Sigma_N$. Taking  a subsequence if it is necessary, one  can assume that for some $R$ large enough, the subsequence   $(\sul^m)_{m\geq 1}$  is contained in   $\Sigma_N\cap B(\zul,R)$. So for every $m\geq 1$ there is 
   there is  an endpoint $z_{\sul^m}=h_{\sul^m}(0)\in K_N\cap B(z_{\zul},R)$. Since $K_N\cap B(z_{\zul},R)$ is bounded and $K_N$ is closed in $J(f)$  we have there is a subsequence  converging to some point $z\in K_N$. Let $\sul$ be the itinerary associated to $z$, then $\sul\in \Sigma^N_+$ and 
   $\varrho(\sul^{m_j},\sul)=\vert z_{\sul^{m_j}}-z\vert\to 0, j\to\infty.$
   
% Moreover,  $\Sigma_N$ is invariant by $\sigma$.
 
 \vspace{0.5cm}
 
  On the other hand, let
$\Lambda$ be a compact subset of $X$  with respect to the metric~$\varrho$ with $\sigma(\Lambda)\subset\Lambda$. Let  $\sul\in
 \Lambda$ and let $z\in H(\Lambda)$ with itinerary $\sul$, then since   %then we have  $ H^{-1}(\Lambda)$ is compact in $\mathcal{EP}$.
  the compact subset  $H(\Lambda)$ intersects only a finite
 numbers of tracts (see~\cite[Lemma~$3.2$]{BK07}),
 % then if $z_{\underline{s}}\in
 %H^{-1}(\Lambda)$ we have 
 there exists
 $N_0\geq 1$ such that
  for
 every $j\geq 0$  we have $|s_j|\leq N_0$. Therefore $ \Lambda\subset \Sigma_{N_0}.$
 % Finally, since $K_N$ is closed in $\C$, then $K_N$ is compact respect to the euclidean metric, so the set $X_N$ is compact, with the metric induced by the euclidean metric.

\medskip 

2. 
   Follows from the derivative grown condition and the uniformly expanding property of $f$,
see~\cite[Proposition~4.4]{MU10}.

\medskip

 % \end{proof}
%  \begin{proof}
%%  This condition follows because 
%Let $f\in \mathscr{F}$,  and $\delta_0=\min\{1/2,\dis(J(f),\overline{PS}(f))\}>0$,
%where $\dis$ is the Euclidean distance between the Julia set of $f$ and the closure of  the post-singular set of $f$. Then, since $f$ satisfies 
% the derivative grown condition, it is  uniformly expanding, that is 
%%    see Lemma~ \cite{MayUrb}
%%\begin{quote}
% there is $\mathfrak{C}>0$ and $\lambda>1$ such that for all $z\in J(f)$ we have
% $\vert (f^n)'(z)\vert~\geq~\mathfrak{C}\lambda^n$~(compare~\cite[Proposition~4.4]{MU10}).
%%\end{quote}
%So, for every $w_1,w_2\in J(f)$, such that $\vert w_1- w_2\vert<\delta_0$  and for $z_1\in f^{-n}(w_1)$, $z_2\in f^{-n}(w_1)$ we have
%   $$\vert f_{z_1}^{-n}(w_1)- f_{z_2}^{-n}(w_2)\vert\leq C\lambda^{-n}\vert w_1- w_2\vert.$$
%   Hence $C$ is a positive constant depending of the constant of Koebe Distortion Theorem and $ \mathfrak{C}$,  and 
% $\lambda_{\ES}=\lambda$. 
%  \end{proof}

  %\begin{prop}\label{lem mix}
%For every $R>0$ there exists $n\geq 1$ such that  for every $\sul\in B(\zul,R)$, %we have 
%$
%\sigma^n(B(\sul,\delta_0))\supset B(\zul,R).
%$ Thus $(X,\sigma) $ is topologically mixing
%\end{prop}
%\begin{proof}

3. Following~\cite[Lemma~4.2]{MU10}, we have  for each  $R>0$ there exists $n\geq 1$ such that  for every $w\in B(z_{\zul},R)$, 
$
f^n(B(w,\delta_0))\supset B(z_{\zul},R),
$ then  the  property follows.
Let $\sul, \tul\in X$ and $U=B(\sul,\delta_1)$,  $V=B(\tul,\delta_2)$, for $\delta_1,\delta_2>0$. 
 %Then taking $n_1>0$ such that $C_{ES}\lambda_{ES}^{-n}\delta_0<\delta_1$
 By expanding property in Part~2, follows   there is $n_1>0$ such $\sigma^{n_1}(U)\supseteq B(\sigma^{n_1}\sul,\delta_0)$. 
 Let $R$ be large enough such that $\sigma^{n_1}\sul, \tul\in B(\zul,R)$ and there  there is $N_1$ such that 
 $$
 \sigma^{N_1}(B(\sigma^{n_1}\sul,\delta_0))\supset B(\zul,R).
 $$
 So, $
\sigma^{N_1+n_1}(U)\supset 
\sigma^{N_1}B(\sigma^{n_1} \sul,\delta_0)\supset B(\zul,R).
 $
  Taking $m>N_1+n_1$, we have for every $k\geq m$, 
  $
\sigma^{k}(U)\cap V \neq \emptyset.
  $

\medskip

%\begin{prop}
 % The set
 % $\bigcup_{N\geq 1}\Sigma_N $ is dense in $X.$
%\end{prop}

%\begin{proof}
\noindent  4.  Let $\sul=(s_0s_1\cdots) \in X$ and $\epsilon>0$. 
  Then, there exists
  $n_1>0$ such that
  $$ B(\sigma^{n_1}\sul,\delta_0)\subseteq \sigma^{n_1}B(\sul,\epsilon).$$ 
  It is enough to take $n_1>0$ such that $C\lambda^{-n_1}\delta_0<\epsilon.$
  So,  we have
  $$ s_0s_1\cdots s_{n_1-1}B(\sigma^{n_1}\sul,\delta_0)\subseteq
  B(\sul,C\lambda^{-n_1}\delta_0)\subseteq B(\sul,\epsilon).$$
Hence, $ B(\sigma^{n_1}\sul,\delta_0)\subseteq
  \sigma^{n_1}B(\sul,\epsilon).$
    % by  Lemma~\ref{1aul_r} there exists $n_1>0$ such that
%\begin{equation}\label{eq1a}
 % B(\sigma^{n_1}\sul,\delta_0)\subset \sigma^{n_1}B(\sul,\epsilon).
%\end{equation}
%\noindent On the other hand, 
Since  $X=\bigcup_{R>0}B(\zul,R)$, then for some  $R>0$ we have
  $\sigma^{n_1}\sul\in B(\zul, R)$.  Moreover  from Part~3,    there is $n_2>0$ such that
%\begin{equation}\label{eq1b}
 $ B(\zul,R)\subset \sigma^{n_2}B(\sigma^{n_1}\sul,\delta_0).$
%\end{equation}
  Therefore,
  %from (\ref{eq1a}) and (\ref{eq1b}) we have 
  for
  $n=n_1+n_2$  we  follows that
  $B(\zul,R)\subset \sigma^{n}B(\sul,\epsilon). $ 
  Hence, the set
  $\sigma^n B(\sul,\epsilon)$ contains the sequence
  $\underline{0}=00\cdots$.
  Let  $w^*=w_0\cdots w_{n-1}\in \Z^n$ such that $ w^* B(\zul,R)\subset B(\sul,\epsilon)$,
  then  $w_0\cdots w_{n-1}\zul \in B(\sul,\epsilon)$. Then, taking $N:=\max\{
  |w_0|,\cdots, |w_{n-1}|\}$  we conclude $ w_0 w_1\cdots w_n\zul\in
  \Sigma_N$. See also Lemma~\ref{aul_r} and Lemma \ref{lem: dense}
  
  \medskip

\medskip

Let  $\Cb(J(f),\R)$ be denote the 
Banach space of bounded continuous functions on $J(f)$.
For each potential $\varphi\in\mathscr{P}_f$, the transfer operator associated to $\varphi$  and denoted by $\mathscr{L}_\varphi$ acts 
%the transfer operator
% $\mathscr{L}_{\varphi_{c,t}}$ associated to a potential
%$\varphi_{c,t}$
  continuously  on $\Cb(J(f),\R)$. So for each~$\psi\in \Cb(J(f),\R)$, 
%defined on the Julia set $J(f)$, that is  for
 %$\psi\in \Cb_b(J(f))$  we have
%\begin{equation*}
$$\displaystyle \mathscr{L}_{\varphi}\psi(z)
=\sum_{f(w)=z}\psi(w)\exp(\varphi(w)).$$
%\end{equation*}

%\marginpar{Aqui hay que decir que los resultados se aplican sin mucha modificacion al trabajo de Urbanski}

Following~\cite{MU10}, the thermodynamic formalism was stated for a large class of hyperbolic meromorphic functions
$f:~\C\rightarrow \overline{\C}$ of finite order $\rho'$
satisfying  a rapid growth condition and for a class of
tame potentials.
%(that is, potentials of the form
%$h-t\log|f'|_\theta$, where $t>\frac{\rho'}{\alpha_1+\alpha_2}$,
%$\alpha_1, \alpha_2$ come from the derivative growth condition,
%and $h$ is a bounded weakly H\"older function). 
To prove Theorem~\ref{teo4} one could apply  without  making too many changes the approach  used in ~\cite{MU10}
 to the family of transcendental entire maps  and potentials under study. 
 %without many changes. So the following hold. 
%Although the class
%of maps $\mathscr{F}$ in Theorem~\ref{teo4}  does not include all
%maps considered in~\cite{MayUrb}, we consider a different class of
%potentials.
%  for  which a probability conformal measure and
%absolutely continuous probability measure exist.
%\end{enumerate}
%In fact,
%in Suction~\ref{subs3}
 %below  we show that for a certain
%exponential map $f$ there is a potential
 %in $ \mathscr{P}_f$ that  is not
%tame.%, such is the case of the exponential family below
%for the exponential
 %family
 %in~\ref{subs3}.
%.
 %There is a potential $\phi\in \mathcal{P}$
%e case of the
%exponential map see~
% Let
%$\phi_{c,t}= \log (c|f'|_\tau^{-t})\in \mathcal{P} $, such that
%$\sup_{S\in \mathcal{S}} c(z(S))=0$, (such as  the case of the
%exponential family below, Subsection~\ref{subs3}), then
%%Let $\epsilon>0$  sufficiently small such that
%%there is   $S\in \mathcal{S}$ such that $c(z(S))<\epsilon$, then
%%the potential  $\phi_{c,t}(z)= \log (c(z(S))|f'(z)|_\tau^{-t}) $
%%belongs to $\mathcal{P}$,
%   clearly  $\phi_{c,t}$ is not tame potential in the sense of~\cite{MayUrb}.
   %% where $k\in \Z $,  and each one
    %is contained within  the strip $-\pi/2+2k\pi<\im(z)<\pi/2+2k\pi$.
%\begin{rem}
%\end{rem}
%\begin{enumerate}
%\item[-]
For instance, the  Proposition~\ref{pro: potentials} seems to works without any modifications for potentials $\phi_c$ with $c$ being only bounded from above in~\cite{MU10}.
Otherwise one can obtain
this  result as a consequence of the Theorems~\ref{teo1}, \ref{teo2} and \ref{teo3}, realizing that the class of potentials hyperbolic entire maps and potentials studied fit  in the hypothesis of the results developed in the next section for a symbolic version.

%, however we give  the proof.

\begin{prop}\label{pro: potentials}
Given $f\in\mathscr{F}$, we have  each potential $\varphi_{c,t}\in \mathscr{P}_f$ satisfies the following properties.
% is summable
\begin{enumerate}
\item $\displaystyle\sup_{\wul\in X}\mathscr{L}_{\varphi_{c,t}\circ H}(\indicator{})(\wul)<\infty$
\item $\displaystyle\lim_{R\rightarrow \infty}\sum_{s\in\Z}\exp\left(\displaystyle\sup_{\wul\in[s]\cap
(X\backslash B(\zul,R))}\varphi_{c,t}\circ H(\wul)\right)=0.$
\item 
$\displaystyle\lim_{z_{\wul}\rightarrow \infty}\mathscr{L}_{\varphi_{c,t}\circ H}\indicator{}(\wul)=0$
\end{enumerate}

 \end{prop}

\begin{proof}

%  Perron–Frobenius Operators and Generalized
%Conformal Measures [MU10]

 %The important thing is that here $c$ is only
%supposed to be bounded from above whereas the potentials in [MU10] are of the same kind but with bounded Holder function $c$. The paper  [MU10] however is much more general, goes much farther than the results in this note

%and the 

%The results here seems to work without any modifications for potentials $\phi_c$ with $c$ being only bounded from above, in the proof of the results from \cite{MU10}, however we give  the proof.

Let $\varphi_{c,t}= \log c-t\log|f'|_\theta$ be a potential in $\mathscr{P}_f$
%the transfer operator
% $\mathscr{L}_{\varphi_{c,t}}$ associated to a potential
%$\varphi_{c,t}$
% acts  continuously  on the Banach space of all bounded continuous
 %     functions
 and $\psi\in\Cb(J(f),\R)$, 
 % defined on the Julia set $J(f)$, that is  for
 %$\psi\in \Cb_b(J(f))$  we have
%\begin{equation*}
%\end{equation*}
 \begin{multline*}
 %\mathscr{L}_{_{\varphi_{c,t}\circ H}}\psi(\wul)
%=\sum_{\sigma(\sul)=\wul}\psi(\sul)c(z_{\sul})|f'(z_{\sul})|^{-t}_\theta,%\\
\mathscr{L}_{_{\varphi_{c,t}\circ H}}\psi\circ H(\wul)
=\sum_{\sigma(\sul)=\wul}\psi(z_{\sul})c(z_{\sul})|f'(z_{\sul})|^{-t}_\theta\\
  =\sum_{\sigma(\sul)=\wul}\psi(z_{\sul})c(z_{\sul})|f'(z_{\sul})|^{-t}|z_{\sul}|^{-\tau
 t}|f(z_{\sul})|^{\tau t}\\
= |z_{\wul}|^{\tau t}
 \sum_{\sigma(\sul)=\wul}\psi(z_{\sul})c(z_{\sul})|f'(z_{\sul})|^{-t}|z_{\sul}|^{-\tau
 t}.
 \end{multline*}
Since $f$ satisfies the  derivative growth  condition, we have
\begin{multline*}
 \mathscr{L}_{\varphi_{c,t}\circ H}(\indicator{})(\wul)
  \leq \kappa^t |z_{\wul}|^{\tau t}\sum_{\sigma(\sul)=\wul}c(z_{\sul}) |z_{\sul}|^{-\alpha_1
  t}|f(z_{\sul})|^{-\alpha_2 t}|z_{\sul}|^{-\tau t}\\
= \kappa^t |z_{\wul}|^{\tau t}\sum_{\sigma(\sul)=\wul}c(z_{\sul}) |z_{\sul}|^{-\alpha_1
  t}|z_{\wul}|^{-\alpha_2 t}|z_{\sul}|^{-\tau t}\\
     \leq  \frac{\kappa^{t}}{|z_{\wul}|^{t(\alpha_2-\tau)}}\sum_{\sigma(\sul)=\wul}c(z_{\sul})|z_{\sul}|^{-(\tau+\alpha_1) t}\\\leq
\frac{\kappa^{t}}{|z_{\wul}|^{t(\alpha_2-\tau)}}\sup_{\sul\in\mathcal{S}^\N}
c(z_{\sul})\sum_{\sigma(\sul)=\wul}|z_{\sul}|^{-(\tau+\alpha_1) t}.
  \end{multline*}
Since  $f$  is a transcendental entire function of finite order $\rho$ and $t>\rho/(\tau+\alpha_1)$, then the Borel-Picard~Theorem~(see \cite[Theorem~3.5]{MU10})
 states that the series has  the exponent of  convergence  equal
 to  $\rho$. So the  last sum is finite. Following 
 %Moreover, since $f$ is hyperbolic then  we can assume that $0\notin J(f)$, so following
~\cite[Proposition $3.6$]{MU10}, there exists $ \mathcal{M}_t>0$
     such for
 all  $\wul\in X$ we have
 \begin{equation}\label{operator}
\mathscr{L}_{\varphi_{c,t}\circ H}(\indicator{})(\wul)\leq
 \frac{\mathcal{M}_t}{|z_{\wul}|^{t(\alpha_2-\tau)}}\sup_{\sul\in \mathcal{S}^\N}
c(z_{\sul}).
\end{equation}
%So, 
% \begin{equation*}
%\mathscr{L}_{\varphi_{c,t}\circ H}(\psi)(\wul)\leq
% \frac{\mathcal{M}_t}{|z_{\wul}|^{t(\alpha_2-\tau)}}\sup_{\sul\in X }\varphi(z_{\sul})\sup_{\sul\in \mathcal{S}^\N}
%c(z_{\sul}).
%\end{equation*}
%
%Therefore 
%$$\sup_X \mathscr{L}_{c,t}(\psi)<\infty.$$
% 
So, the equation~(\ref{operator}) implies 
$\displaystyle \lim_{R\rightarrow \infty}\sum_{s\in\Z}\exp\left(\displaystyle\sup_{\wul\in[s]\cap
(X\backslash B(\zul,R))}\varphi_{c,t}\circ H(\wul)\right)=~0$ and 
$\lim_{z_{\wul}\rightarrow \infty}\mathscr{L}_{\varphi_{c,t}\circ H}\indicator{}(\wul)=0.$
\end{proof}

 %\newpage
 
%  Notice that each potential $\phi_{c,t}\in \mathscr{P}$
%satisfies the the condition of summability   given
%in~(\ref{sumable}). It follows
 %  from~(\ref{operator}). The condition of bounded on ball given in~(\ref{balls}) follows from the rapid growth  condition.
%on the potential given in
%\ref{sumable}
%\item[-]
 %The completion  $Z$ is the Julia set $J(f).$ %
 %\item[-]
 %Moreover, from equation~(\ref{operator}) implies that the
%condition (\ref{eq:Cond.Potential1}) is satisfies.
 %The condition (\ref{eq:Cond.Potential2})  is satisfies since if we consider
 %$R>0$ and   $U_R:=\left\{z\in \C: |z|>R\right\}$. Then  from
%~(\ref{operator}), we have that for every  $\phi_{c,t}\in
%\mathscr{P}$
% there exists  $C=C(t,\tau)$ such that
%and   for every $w\in J(f)$,
%$$
%\lim_{R\rightarrow \infty}\sum_{S\in
%\mathcal{S}}\exp\left(\displaystyle\sup_{z\in S\cap
%U_R}\phi_{c,t}(z)\right)=0.
%$$
%\item[-]
 %From the inequality~(\ref{operator}) follows that
 %$$\lim_{w\rightarrow %\infty}\mathscr{L}_{\phi_{c,t}}\mathds{1}(w)=0.$$ So,
 %the rapidly condition is satisfied.

 %Thus the Theorem~\ref{teo4} follows as a consequence of the Theorem~\ref{teo1}, Theorem~\ref{teo2} and Theorem~\ref{teo3} as well.

 \section{Symbolic dynamic outlook }\label{Appendix}
 
In this section we give a very general approach that will complement the results  in  previous section. To do that, we give  certain conditions to guarantee what is desired  from a symbolic perspective. 

%\subsection{Symbolic dynamic outlook}
 Let $$\Sigma^+:=\{\aul=(a_0a_1\ldots):a_j\in\Z, \text{ for all } j\geq 0\}=\Z^\N,$$   be the
 full shift space on a countably infinite number of symbols endowed with the product topology, and let
$\sigma:\Sigma^+\rightarrow\Sigma^+$ be the shift transformation
defined by $\sigma(a_0a_1\cdots )=(a_1a_2\cdots).$
 For every  $N\geq 1$ let
 \begin{equation}
\Sigma^+_N:=\{\aul=(a_0a_1\cdots )\in\Sigma^+: a_j
 \in \{-N,\cdots, N\}\}.
\end{equation}
%  be the space of sequences with
%the alphabet $\{-N,\cdots, N\}$.

The purpose of this section is to develop a thermodynamic
formalism for the dynamics of  $\sigma$  on  some  subset $X$ of
$\Sigma^+$  which is completely invariant, that is, satisfies
 $$\sigma(X)=X=\sigma^{-1}(X).$$
Thus  the restricted transformation $\sigma|_{X}$ and all of its
 iterates are well defined.
We endow $X$
 with  some metric~$\varrho$  which is not necessarily compatible
 with the product metric on $\Sigma^+$, in the sense that it is not assumed that
the metric $\varrho $ on $X $ can be extended to the full shift in the way that $\sigma$ is still continuous.
%Moreover, we assume that satisfies  some additional
 %properties. 
 
 We do not assume that $X$ is compact, however,
  we assume that, 
  \begin{quote}
  for  each $N\geq 1$, the set $X$ contains
 $\Sigma^+_N$, and that
 $\Sigma^+_N$ is a compact  with respect to $\varrho$. Moreover we
 assume that  for each  subset $\Lambda\subset X$, that is
 compact  with respect to $\varrho$  such that  $\sigma(\Lambda)\subset\Lambda$ there exists $N\geq 1$
 such that $\Lambda\subset \Sigma^+_N$.
\end{quote}
In Section~\ref{subs1} we give conditions on $(X,\varrho)$ that
implies  existence of  a
%for a certain  to
%have a
 conformal measure and an absolutely continuous invariant
measure  for a certain class  of weakly
 H\"older continuous
potentials, see Theorem~\ref{teo1}, Theorem~\ref{teo2} and
Theorem~\ref{teo3}. 
%In Section~\ref{subs2} we show how to
%a\-pply these results to  a class of hyperbolic entire
%transcendental maps $f:~\C\rightarrow\C$ %(Theorem~\ref{teo4})
% In this Subsection
 %we will proved Theorem~\ref{teo4}.

\subsection{Conditions on the space $(X,\varrho)$ and the main
results}\label{subs1}
 %Let us introduce the following notation.
 Given $\aul=(a_0 a_1\ldots)\in \Sigma^+$  and given a word of finite length
 $b^*=b_0\cdots b_{n-1}\in~\Z^n$ let us write
 $$
b^*\aul=(b_0\cdots b_{n-1}a_0 a_1\cdots).
 $$
 Given a set $A\subset \Sigma^+$ let us write $b^*A=\left\{b^*\aul: \aul\in
 A\right\}$. For $\aul\in X$ and $\delta>0 $ we define
$$ B(\aul,\delta):=\big \{\bul\in X:\varrho(\aul,\bul)<\delta\big \},$$
$$\B_0(\aul,\delta):=\big \{\bul\in X:\varrho(\aul,\bul)<\delta \textrm{ and }
   b_0=a_0\big \}.$$
   Let us denote
   $$\overline{B(\zul,R)}=\left\{\bul\in X: \varrho(\zul,\bul)\leq R \right\}.$$
By $\B_n(\aul,\delta)$ we denote  the $(n,\delta)$-ball at
$\aul\in X$, namely
$$
\B_n(\aul,\delta):=\left \{\bul\in X:\sigma^j(\bul)\in
 \B_0(\sigma^j(\aul),\delta), \textrm{ for  all }
j=0,1,\cdots,n\,
 \right\}.$$

 We assume that there exists $\delta_0>0$
such that the metric $\varrho$ satisfies the \emph{exponential
shrinking of  preimages on balls}
 $(\ExpShr)$ and \emph{connection on
balls} $(\Conn)$ given below.
\begin{itemize}
\item[$(\ExpShr)$] There exist  $C_{\ES}>0$ and $\lambda_{\ES}>1$
such that, for all $n\in\N$, $\aul,\bul\in X$ and $c^*\in\Z^n$,
 if $\varrho(\aul,\bul)<\delta_0$ then we have
\begin{displaymath}
\varrho(c^*\aul,c^*\bul)\leq
C_{\ES}\lambda_{\ES}^{-n}\varrho(\aul,\bul).
\end{displaymath}
\end{itemize}
 \begin{itemize}
\item[$(\Conn)$] For every $R>0$ there exists $n\geq 1$
 such that  for all $\aul\in B(\zul,R)$
 we have  $$\sigma^n B(\aul,\delta_0)\supset
B(\zul,R).$$
\end{itemize}
  So the plan  here is as follows.
\subsubsection*{Weakly H\"older continuous potentials}
  Given $\delta_1>0$  and $\alpha\in~(0,1]$. We say that  a function
      $\phi:X\rightarrow \R$ is
  \emph{uniformly} $\delta_1$-\emph{locally}
  $\alpha$-\emph{H\"older} 
  %$(\ULH)$ 
  if there exists $L\geq 0$ such
  that  for all $\aul,\bul,\cul\in X$ satisfying $\aul,\bul\in\B_0(\cul,\delta)$
   %$\varrho(\aul,\bul)<~\delta_1
     we have
  $$|\phi(\aul)-\phi(\bul)|\leq L(\varrho(\aul,\bul))^\alpha.$$
 Given $\delta\in(0,\delta_0]$,
 $n\geq 0$ we define the
 $(n,\delta)$-\emph{variation} of  the potential
 $\phi:X\rightarrow\mathbb{R}$ (See~\cite{Sa99}) by
$$
 \Var_n(\phi):=\sup_{\aul\in X}\sup_{\bul,\cul\in
\B_n(\aul,\delta)}|\phi(\bul)-\phi(\cul)|.
$$
Given $r\in(0,1)$ and $\delta\in (0,\delta_0]$, we say that a
potential $\phi:X\rightarrow \R$ is $(\delta,r)$-\emph{weakly
H\"older continuous} if there exist $C>0$
 such that  for every $n\geq 0$
$$\Var_n(\phi)\leq C r^n.$$
We denote by $\BV_r(X,\mathbb{R})$   the space of all
$(\delta,r)$-weakly H\"older continuous. We work with  fixed
$\alpha\in(0,1)$ and $ \delta\in (0,\delta_0]$ and we prove that
every uniformly $\delta$-locally $\alpha$-H\"older continuous
potential is $(\delta,r)$- weakly H\"older continuous with
$r=\lambda_{\ES}^{-\alpha} $ (Lemma~\ref{lem:delta1}). We assume
further that the following  condition $(\Delt)$  is satisfied.
 \begin{itemize}
\item[$(\Delt)$] If $\phi$ is $(\delta',r)$-\emph{weakly H\"older
continuous}   then
 $\phi$
 is  $(\delta,r)$-\emph{weakly H\"older continuous}, where  $\delta'=\min\{\delta,\delta/C_{\ES}\}.$
\end{itemize}
 Since this  condition could be a bit difficult to verify directly,
 we show that the topological condition ($(\Path)$)  implies
$(\Delt)$ (Proposition~\ref{weakly}).
\begin{itemize}
\item[$(\Path)$] There exists $\ell\geq 1$ such that for all $\aul\in
X$ and for all $ \bul,\cul\in\B_0(\aul,\delta)$, there exists a
sequence
$$\bul=\aul_0,\aul_1,\cdots,\aul_\ell=\cul,$$
such that for all $ j\geq 0$ we have
 $\varrho(\aul_j,\aul_{j+1})<\delta'$.
\end{itemize}
\subsubsection*{The transfer operator and the pressure}
We say that  a potential $\phi$ is \emph{summable} if it satisfies
  \begin{equation}\label{sumable}
  \sup_{\aul\in X}\left\{\sum_{\bul: \sigma(\bul)=\aul}\exp(\phi(\bul))
  \right\}<\infty.
  \end{equation}
We say that $\phi$ is \emph{bounded on balls} if
  for all $R>0$ we have
  \begin{equation}\label{balls}
  \sup_{\cul\in B(0,R)}|\phi(\cul)|<\infty.
  \end{equation}
If we assume  that the potential $\phi$ satisfies (\ref{sumable})
and (\ref{balls}) then for any  real bounded  continuous  function
$g$ on $X$,
  the \emph{transfer operator}
  $$  \mathscr{L}_\phi(g)(\aul):=\sum_{\bul:\sigma(\bul)=\aul}e^{\phi(\bul)}g(\bul),
$$
 is  well-defined.
 % on the space of real bounded continuous functions on
 %$X$,
 We denote the space of real bounded continuous functions on
$X$  by  $\Cb_b(X,\R)$. For every $n\geq 1$ and $\aul\in X$ we put
$$S_n \phi (\aul)=\sum_{k=1}^{n-1}\phi\circ\sigma^k(\aul).$$
 So, for every $n\geq 1$ and $\aul\in X$ we have
 $$\mathscr{L}^n_\phi(g)(\aul)=\sum_{\bul: \sigma^n(\bul)=\aul}e^{S_n \phi (\bul)}g(\bul).$$

 Since our space $X$ is not compact the topological pressure is
 not defined in the standard way, so we define the
 \emph{pressure} $P(\phi)$ of a potential weakly H\"older
 continuous  $\phi$ with respect to $\sigma$ as the supremum over
 all the topological pressures of $\phi$ restrict to $\Sigma_N^+,$
 that is
\begin{equation}\label{topol.press}
P(\phi):=\sup_{N\geq 1} P_{\Top}(\phi|_{\Sigma^+_N}),
\end{equation}
where
  $P_{\Top}(\phi|_{\Sigma^+_N})$ is the  topological pressure of
$\phi$ over  $\Sigma^+_N$, that is defined  
%as in~\cite{Bal} and
%\cite{Zin}, that is,  
for
 each $N\geq 1$
 $$P_{\Top}(\phi|_{\Sigma^+_N}):=\lim_{n\rightarrow\infty}
\frac{1}{n}\log \sum_{c^*\in \Z^n, c^*\aul\in
\Sigma^+_N}e^{S_n \phi (c^*\aul)}.$$
%\subsubsection*{The  results}

\medskip

 To guarantee the existence of  a
conformal measure we  have to impose certain conditions on $X$,
$\rho$, and the potential $\phi$. Namely, notice that  due to the
lack of compactness of $X$ the Schauder-Tychonoff Fixed Point
Theorem cannot be applied. %(see~\cite{DunSch}).

\medskip

  For any $a^*\in\Z^n$ the cylinder $[a^*]$
 is defined by
$$[a^*]:=\left\{\bul\in X: b_i=a_i, 0\leq i\leq n-1\right\}.$$
Let us assume that  the following conditions are satisfied.
\begin{enumerate}
\item The transformation
 $\sigma:X\to
X$ can be continuously extended to
$\overline{\sigma}:\X^\varrho\to \X^\varrho$, where $\X^{\varrho}$
is the completion of $X$ with respect to the metric $\varrho$.
\item
 For any $R>0$,  the set
  $\overline{B(\underline{0},R)}$ is compact in $\overline{X}^\varrho$.
\item For every $R>0$ and for every $\bul\in
\overline{X}^\varrho\sms X$ there exists $N\geq 1$ such that for
$n\geq N$, we have $\overline{\sigma}^n(\bul)\in
\overline{X}^\varrho\sms \overline{B(\underline{0},R)}$.
 \item
Given $R>0$ and $k\in \Z$, let
% \begin{quote}
$$[k,R]:=\{\aul\in [k]\cap X:\varrho(\underline{0},\aul)>~R\}.$$
%\end{quote}
Assume that every  potential $\phi\in \BV_r(X,\mathbb{R})$,
 satisfies
\begin{equation}\label{eq:Cond.Potential1}
\sum\limits_{k\in \Z}e^{\sup \phi |_{[k ]}}<\infty,
\end{equation}
\begin{equation}\label{eq:Cond.Potential2}
\lim_{R\rightarrow \infty}\sum\limits_{k\in\Z}e^{\sup\phi
|_{[k,R]}}=0.
\end{equation}
\item Every potential $\phi\in \BV_r(X,\R)$ has a continuous
extension to $\overline{X}^\varrho$.
\end{enumerate}
If it does not lead to misunderstanding we will frequently  denote
$\overline{\sigma}$ simply by $\sigma$.

 Set
   $$X_{\rad}:=\left\{\aul\in \overline{X}^\varrho: \omega_{\sigma}(\aul)\cap X\neq
 \emptyset\right\}.$$
 Let us  state the main results of this paper.
 %The details of
 The proofs can be found in  Section~\ref{Conf.Meas.}.
\begin{teo}\label{teo1} Let $\sigma: X\rightarrow X$ be the shift map and let  $\phi\in \BV_r(X,\R)$ satisfying the
conditions  $(\ie)$ $(\iie)$, $(\iiie),$ $(\ive)$ and $(\ve)$.
Then
 there exists a measure $\nu$ that is
 $e^{P(\phi)-\phi}$-conformal and satisfies
 $\nu(X_{\rad})=1.$
\end{teo}
We  obtain  the measure $\nu$ % in  this Theoremas  the
%that there is
%a conformal measure $\nu$, which is obtained as the
 as  the
weak limit of a \emph{tight} sequence of measures
  $\{\nu_{_N}\}_{N\in \mathds{N}}$,
 where every measure  $\nu_{_N}$  is
 conformal with respect to  $\sigma|_{\Sigma^+_N}.$ We recall  the definition  of   \emph{tightness}  in Section~\ref{Conf.Meas.}.
 %Moreover we prove that  the set
%   $$X_{\rad}:=\left\{\aul\in \overline{X}^\varrho: \omega_{\sigma}(\aul)\cap X\neq
% \emptyset\right\}$$
% has full measure.
\begin{teo}\label{teo2}
Let $\sigma: X\rightarrow X$ be the shift map  and let $\phi\in
\BV_r(X,\R)$ satisfying the  conditions $(\ie)$ $(\iie)$,
$(\iiie),$ $(\ive)$ and $(\ve)$. Then
  there exists an invariant measure $\mu$
 that is
absolutely continuous with respect to
  $\nu$ and is a Gibbs measure  for $\phi.$
\end{teo}
\noindent  Moreover
 we say that  a potential $\phi\in \BV_r(X,\mathbb{R})$ is  \emph{rapidly decreasing}
 if
 \begin{equation}\label{decreasing}
\lim_{R\rightarrow\infty}\sup_{\aul\in
\overline{X}^\varrho\backslash B(\zul,R)}
\mathscr{L}_\phi\mathds{1}(\aul)=0.
\end{equation}
Under this additional assumption we are able to prove the
following result.
\begin{teo}\label{teo3}
 Let  $\phi$ be a potential  satisfying $(\ref{sumable})$, $(\ref{balls})$ and
 $(\ref{decreasing})$.
 %be a rapidly
 %decreasing potential.
 Then  for each $\aul \in
\overline{X}^\varrho$ we have that  the  limit
$\lim_{n\rightarrow\infty}\frac{1}{n}\log
\mathscr{L}^n_{\phi}\mathds{1}(\aul)$ exists and is equal to the
pressure $P(\phi)$. This limit is independent of $\aul.$
\end{teo}

\medskip

 Notice that  from~(\ref{operator}) each potential $\phi_{c,t}\in \mathscr{P}_f$
satisfies the the condition of summability   given
in~(\ref{sumable}).  The condition of bounded on ball given in~(\ref{balls}) follows from the rapid growth  condition.
%on the potential given in
%\ref{sumable}
%\item[-]
 %The completion  $Z$ is the Julia set $J(f).$ %
 %\item[-]
 Moreover, from equation~(\ref{operator}) implies that the
condition (\ref{eq:Cond.Potential1}) is satisfied.
 The condition (\ref{eq:Cond.Potential2})  follows  since  for  $R>0$ and   $U_R:=\left\{z\in \C: |z|>R\right\}$, 
 %Then  from
%~(\ref{operator}),
we have that for every  $\phi_{c,t}\in
\mathscr{P}$
% there exists  $C=C(t,\tau)$ such that
and   for every $w\in J(f)$,
$$
\lim_{R\rightarrow \infty}\sum_{S\in
\mathcal{S}}\exp\left(\displaystyle\sup_{z\in S\cap
U_R}\phi_{c,t}(z)\right)=0.
$$
%\item[-]
 and 
 %From the inequality~(\ref{operator}) follows that
 $$\lim_{w\rightarrow \infty}\mathscr{L}_{\phi_{c,t}}\mathds{1}(w)=0.$$ 
 %So,
 %the rapidly condition is satisfied.
 Thus  Theorem~\ref{teo4} can  follow as a consequence of the Theorem~\ref{teo1}, Theorem~\ref{teo2} and Theorem~\ref{teo3}.
%  and then the topological
 %pressure $P(\phi_{c,t})$ of the potential $\phi_{c,t}\in \mathcal{P}$  is as in the Theorem~\ref{teo3}.
 %\end{enumerate}

\subsection{Some preliminary Lemmas}

Throughout all this section we let $X$ be a subset of $\Sigma^+$
such that $\sigma(X)=X$ and $\sigma^{-1}(X)=X$. Furthermore
we assume $X$ is  endowed with a metric$~\varrho$ satisfying
$(\ExpShr)$ with constants $\delta_0>0$, $C_{\ES}>0$ and
$\lambda_{\ES}> 1$.

 The following properties follow immediately from the definitions,
and will be used several times. For all $\delta>0$, $\aul\in X$,
$n,m\geq 0$  and $c^*\in \Z^n$ we have
\begin{equation}\label{pordefi1}
 \sigma^n(\B_{m+n}(\aul,\delta))\subseteq
 \B_{m}(\sigma^n\aul,\delta),
 \end{equation}
\begin{equation}\label{pordefi2}
\textrm{ and } \B_{m+n}(c^*\aul,\delta)\subseteq
c^*\B_{m}(\aul,\delta).
\end{equation}
Then the following lemmas hold.
\begin{lem}\label{TresLemas}
Fix $n_0\geq 0$ such that
$C_{\ES}\lambda_{\ES}^{-n_0}\leq\min\{1,1/C_{\ES}\}.$ Then for all
$\delta\in (0,\delta_0]$,\, $n, m\geq 0$, $c^*\in \Z^n$ and
$\aul\in X$, we
  have
\begin{enumerate}
\item[1.] \label{lem:ESc}$\B_{m+n}(c^*\aul,\delta)\subseteq
c^*\B_m(\aul,\delta)\subseteq
\B_m(c^*\aul,C_{\ES}\lambda_{\ES}^{-n}\delta). $\\
 In particular taking
$n=n_0$, we have
$$\B_{m+n_0}(c^*\aul,\delta)\subseteq\B_{m}(c^*\aul,\min\{\delta,\delta/C_{\ES}\}).$$
\item[2.] \label{lem:delta2}
$\B_{m+n}(c^*\aul,\min\{\delta,\delta/C_{\ES}\})\subseteq
c^*\B_m(\aul,\min\{\delta,\delta/C_{\ES}\})\subseteq\B_{m+n}(c^*\aul,\delta).$
  \end{enumerate}
\end{lem}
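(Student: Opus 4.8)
The plan is to prove both inclusions by unwinding the definitions of the sets $\B_k$ and combining them with the exponential shrinking estimate~(\ref{Exp}) from Proposition~\ref{ExpShr}, together with the two elementary inclusions~(\ref{pordefi1}) and~(\ref{pordefi2}). Throughout, $\delta\in(0,\delta_0]$ is fixed so that~(\ref{Exp}) applies, and $n_0$ is chosen so that $C_E\lambda_E^{-n_0}\leq\min\{1,1/C_E\}$.

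For part~1, the middle inclusion $\B_{m+n}(u^*\sul,\delta)\subseteq u^*\B_m(\sul,\delta)$ is exactly~(\ref{pordefi2}). For the right-hand inclusion, I would take $\wul\in u^*\B_m(\sul,\delta)$, write $\wul=u^*\tul$ with $\tul\in\B_m(\sul,\delta)$, and check that $u^*\tul\in\B_m(u^*\sul,C_E\lambda_E^{-n}\delta)$. By the definition of $\B_m$, this means verifying, for each $j=0,\dots,m$, that $\sigma^j(u^*\tul)\in\B_0(\sigma^j(u^*\sul),C_E\lambda_E^{-n}\delta)$; since $j\leq m< n$ in the relevant range one must be slightly careful, but using $\sigma^j(u^*\tul)=(u_j\cdots u_{n-1})\,\tul$ (a word of length $n-j$ prepended to $\tul$), the first-symbol condition $b_0=a_0$ is automatic, and the metric estimate follows from applying~(\ref{Exp}) with the word $(u_j\cdots u_{n-1})$ of length $n-j\geq 1$ to the pair $\tul,\sul$, noting $\varrho(\sigma^j\tul,\sigma^j\sul)\leq\varrho(\tul,\sul)$ is not quite what we need — rather one applies~(\ref{Exp}) directly to $\tul,\sul$ with $\varrho(\tul,\sul)<\delta\le\delta_0$ and the length-$n$ word $u^*$, then restricts. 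In fact the clean route is: $u^*\tul$ and $u^*\sul$ agree on the first $n$ symbols, so for $j\le m$ the tails $\sigma^j(u^*\tul),\sigma^j(u^*\sul)$ still share the prefix $u_j\cdots u_{n-1}$, and~(\ref{Exp}) gives $\varrho(\sigma^j(u^*\tul),\sigma^j(u^*\sul))\leq C_E\lambda_E^{-(n-j)}\varrho(\tul,\sul)\leq C_E\lambda_E^{-(n-j)}\delta$; combined with $\sigma^j\tul\in\B_0(\sigma^j\sul,\delta)$ from $\tul\in\B_m(\sul,\delta)$ via~(\ref{pordefi1}), one gets membership in $\B_0$ at level $j$ with radius $C_E\lambda_E^{-n}\delta$ (taking the worst $j=0$, or just bounding uniformly). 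Finally, the ``in particular'' statement sets $n=n_0$ and uses $C_E\lambda_E^{-n_0}\delta\leq\min\{\delta,\delta/C_E\}$ directly from the choice of $n_0$, while the outer set $\B_{m+n_0}(u^*\sul,\delta)$ replaces $\B_m(u^*\sul,C_E\lambda_E^{-n_0}\delta)$ — here one uses monotonicity in the radius plus the trivial inclusion $\B_{m+n_0}\subseteq\B_m$.

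For part~2, the right-hand inclusion $u^*\B_m(\sul,\min\{\delta,\delta/C_E\})\subseteq\B_{m+n}(u^*\sul,\delta)$ is the substantive one: given $\wul=u^*\tul$ with $\tul\in\B_m(\sul,\delta')$ where $\delta'=\min\{\delta,\delta/C_E\}$, I must check $\sigma^j(u^*\tul)\in\B_0(\sigma^j(u^*\sul),\delta)$ for all $j=0,\dots,m+n$. For $j\geq n$ this reduces via $\sigma^j(u^*\tul)=\sigma^{j-n}\tul$ to the hypothesis $\tul\in\B_m(\sul,\delta')$ (using $\delta'\le\delta$ and~(\ref{pordefi1})). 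For $0\leq j<n$, the two points $\sigma^j(u^*\tul)$ and $\sigma^j(u^*\sul)$ share the prefix $u_j\cdots u_{n-1}$ of length $n-j\ge1$, so~(\ref{Exp}) gives $\varrho\leq C_E\lambda_E^{-(n-j)}\varrho(\tul,\sul)\leq C_E\cdot\lambda_E^{-1}\cdot\delta'\le C_E\delta'\leq\delta$ since $\delta'\le\delta/C_E$; and the first-symbol condition holds because $j<n$ forces the zeroth symbol to be $u_j$ on both sides. The left-hand inclusion $\B_{m+n}(u^*\sul,\delta')\subseteq u^*\B_m(\sul,\delta')$ is again just~(\ref{pordefi2}) with radius $\delta'$.

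The only real obstacle is bookkeeping: keeping straight, for indices $j$ less than versus at least $n$, how $\sigma^j$ acts on a concatenation $u^*\tul$ and what prefix the two images still share, so that the correct power $\lambda_E^{-(n-j)}$ (hence at worst $\lambda_E^{-1}$, giving the factor $C_E$ that the statement's $\min\{\delta,\delta/C_E\}$ is designed to absorb) comes out of~(\ref{Exp}). None of the estimates is delicate once that indexing is pinned down; everything else is monotonicity of $\B_k(\cdot,r)$ in $k$ and $r$ plus the two definitional inclusions. I would present part~1 first and reuse its computations verbatim for part~2.
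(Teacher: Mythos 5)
Your treatment of part~2 and of the purely definitional inclusions matches the paper's argument and is fine. The genuine problem is in the right-hand inclusion of part~1, $u^*\B_m(\sul,\delta)\subseteq\B_m(u^*\sul,C_E\lambda_E^{-n}\delta)$. For $\tul\in\B_m(\sul,\delta)$ you estimate $\varrho(\sigma^j(u^*\tul),\sigma^j(u^*\sul))$ by applying~(\ref{Exp}) with the shared prefix $u_j\cdots u_{n-1}$ to the pair $(\tul,\sul)$, obtaining $C_E\lambda_E^{-(n-j)}\delta$, and then claim this yields the radius $C_E\lambda_E^{-n}\delta$ by ``taking the worst $j=0$''. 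Since $\lambda_E>1$, the factor $\lambda_E^{-(n-j)}$ \emph{increases} with $j$: $j=0$ is the best case, not the worst, and for $1\le j\le m$ your bound strictly exceeds the required radius. Moreover you assume $m<n$ (``$j\le m<n$''), which is not among the hypotheses; for $j\ge n$ the prefix $u_j\cdots u_{n-1}$ is empty and your argument produces no contraction at all. So as written the step fails for every $m\ge 1$.

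The missing observation --- the one the paper's proof uses, albeit tersely --- is that $\tul\in\B_m(\sul,\delta)$ gives you more than $\varrho(\tul,\sul)<\delta$: it gives $t_i=s_i$ for all $i\le m$ and $\varrho(\sigma^j\tul,\sigma^j\sul)<\delta$ for all $j\le m$. Hence for each $j\le m$ the sequences $\sigma^j(u^*\tul)$ and $\sigma^j(u^*\sul)$ share a common word of length exactly $n$ (the symbols in positions $j,\dots,j+n-1$ of $u^*\sul$; for $j\ge 1$ these include symbols $s_i=t_i$ with $i\le j-1\le m$), followed respectively by the tails $\sigma^j\tul$ and $\sigma^j\sul$. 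Applying~(\ref{Exp}) to the pair $(\sigma^j\tul,\sigma^j\sul)$, which is at distance $<\delta\le\delta_0$, with this length-$n$ word gives the uniform bound $C_E\lambda_E^{-n}\varrho(\sigma^j\tul,\sigma^j\sul)<C_E\lambda_E^{-n}\delta$ at every level $j\le m$, with no restriction on $m$ versus $n$; the first-symbol condition is immediate. With this correction your argument closes, and the ``in particular'' clause is then just the composition of the two displayed inclusions with $n=n_0$ together with monotonicity of $\B_m(\cdot,r)$ in the radius $r$ (your appeal to the inclusion $\B_{m+n_0}\subseteq\B_m$ is not what is needed there).
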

\begin{proof}
$\mathbf{1.}$  The  first inclusion is (\ref{pordefi2}).  The
second inclusion  follows since for $\bul\in
c^*\B_{m}(\aul,\delta)$ there is $\bul'\in \B_{m}(\aul,\delta)$
such that $\bul=c^*\bul'$ and for~ each $j=0,\cdots,m $ we have
 $$\varrho(\sigma^j\aul,\sigma^j\bul')<\delta \textrm{ and }
 \aul_j=\bul_j.
 $$
  Then by $(\ExpShr)$ implies that for such $j$ we have
 $$\varrho(\sigma^j(c^*\aul),\sigma^j(c^*\bul'))<C_{\ES}\lambda_{\ES}^{-n}\delta.$$
 \noindent $\mathbf{2.}$ Set  $\delta'=\min\{\delta,\delta/C_{\ES}\}$.
  The first inclusion is (\ref{pordefi2}) with
 $\delta$ replaced by  $\delta'$. To prove the
 second inclusion let $\bul\in
 c^*\B_{m}(\aul,\delta')$. Then there exists
 $\bul'\in \B_{m}(\aul,\delta')$ such that
 $b=c^*\bul'$, hence for all $j=0,1,\cdots,m $ we have
  $$\varrho(\sigma^j\bul',\sigma^j\aul)<\delta' \textrm{ and }
  \bul'_j=\aul_j.$$ Moreover by $(\ExpShr)$ for each $j=0,1,\cdots,n$ we
  have
 $$\varrho(\sigma^j(c^*\bul'),\sigma^j(c^*\aul))\leq C_{\ES}
\lambda_{\ES}^{j-n}\varrho(\bul',\aul)<C_{\ES}\delta'\leq
\delta.$$
\end{proof}
\begin{lem}\label{aul_r}
For all  $\aul\in X$ and $r>0$ there exists  $n>0$ such that
$$ B(\sigma^{n}\aul,\delta_0)\subseteq \sigma^{n}B(\aul,r).$$
\end{lem}
\begin{proof}
  Take $n>0$ such that $C_{\ES}\lambda_{\ES}^{-n}\delta_0<r.$
Thus  by $(\ExpShr)$ we have
$$ a_0a_1\cdots a_{n-1}B(\sigma^{n}\aul,\delta_0)\subseteq
B(\aul,C_{\ES}\lambda_{\ES}^{-n}\delta_0)\subseteq B(\aul,r).$$
And so we have
$$ B(\sigma^{n}\aul,\delta_0)\subseteq
\sigma^{n}B(\aul,r).$$
\end{proof}
\begin{lem}\label{lem: dense}
If  the metric  $\varrho$ satisfies $(\Conn)$, then  the set
$\bigcup_{N\geq 1}\Sigma^+_N $ is dense in $X.$
\end{lem}

\begin{proof}
 Let $\aul\in X$ and $r>0$. %  sufficiently small.
 Then
by  Lemma~\ref{aul_r} there exists $n_1>0$ such that
\begin{equation}\label{eq1a}
B(\sigma^{n_1}\aul,\delta_0)\subset \sigma^{n_1}B(\aul,r).
\end{equation}
Since  $X=\bigcup_{R>0}B(\zul,R)$, then for some  $R>0$ we have
$\sigma^{n_1}\aul\in B(\zul, R)$.  Moreover by $(\Conn)$ there
exists $n_2>0$ such that
\begin{equation}\label{eq1b}
B(\zul,R)\subset \sigma^{n_2}B(\sigma^{n_1}\aul,\delta_0).
\end{equation}
Therefore from (\ref{eq1a}) and (\ref{eq1b}) we have for
$n=n_1+n_2$  it follows
$B(\zul,R)\subset \sigma^{n}B(\aul,r). $ % with $ n=~n_1+n_2.$
 Hence
  $\sigma^n B(\aul,r)$ contains the sequence
 $\underline{0}=00\cdots$.
Let  $c^*\in \Z^n$ such that $ c^* B(\zul,R)\subset B(\aul,r)$,
then  $c_0\cdots c_{n-1}\zul \in B(\aul,r)$. So, taking $N:=\max\{
|c_0|,\cdots, |c_{n-1}|\}$  we conclude $ c_0c_1\cdots c_n\zul\in
\Sigma^+_N$.
\end{proof}
 For all $\aul,\bul\in X$ and $k\geq 1$, we define
 $$\varrho_k(\aul,\bul):=\max_{0\leq j\leq k
 }\left\{\varrho\left(\sigma^j\aul,\sigma^j\bul\right)\right\}.$$
\begin{lem}\label{lem:path}
Let $\delta\in (0,\delta_0]$ and
$\delta'=\min\{\delta,\delta/C_{\ES}\}$. Assume that $\varrho$
satisfies $(\Path)$.
 Then for all
 $k\geq 1$, $\dul\in X$ and
$\aul,\bul\in \B_{k}(\dul,\delta)$ we have that  there exists a
sequence $\aul=\cul_0,\cul_1,\cdots,\cul_{\ell^k}=\bul$ such that
for every $j\in\{0,\cdots,\ell^k-1\}$,
$$\varrho_k(\cul_j,\cul_{j+1})<\delta'.$$
\end{lem}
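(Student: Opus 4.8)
The plan is to bootstrap Proposition~\ref{path} (the $\ell$-step path-connectedness inside a single $\B_0$-ball) up to $\B_k$-balls by induction on $k$, where the number of intermediate steps grows like $\ell^k$. The base case $k=0$ is exactly Proposition~\ref{path}, except that there the conclusion is $\varrho(\sul_j,\sul_{j+1})<\delta'$ whereas here we need $\varrho_0(\uul_j,\uul_{j+1})=\varrho(\uul_j,\uul_{j+1})<\delta'$, which is the same statement.

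For the inductive step, suppose the lemma holds for $k-1$ and fix $\wul\in X$ and $\sul,\tul\in\B_k(\wul,\delta)$. Write $\sul=u^*\sul'$, $\tul=u^*\tul'$ where $u^*=w_0\cdots w_{k-1}$... more carefully: since $\sul,\tul\in\B_k(\wul,\delta)$ they agree with $\wul$ in coordinate $0$, so I would instead peel off the first symbol. Set $a=s_0=t_0=w_0$, and let $\sul=a\,\hat\sul$, $\tul=a\,\hat\tul$. From the definition of $\B_k$ and (\ref{pordefi1}), $\hat\sul=\sigma(\sul)$ and $\hat\tul=\sigma(\tul)$ lie in $\B_{k-1}(\sigma(\wul),\delta)$. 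By the inductive hypothesis there is a chain $\hat\sul=\uul_0,\dots,\uul_{\ell^{k-1}}=\hat\tul$ with $\varrho_{k-1}(\uul_j,\uul_{j+1})<\delta'$ for all $j$. Now prepend $a$: consider $a\uul_0,\dots,a\uul_{\ell^{k-1}}$. We have $\varrho_{k}(a\uul_j,a\uul_{j+1})=\max_{0\le i\le k}\varrho(\sigma^i(a\uul_j),\sigma^i(a\uul_{j+1}))$; the $i=0$ term is $\varrho(a\uul_j,a\uul_{j+1})$ which by the exponential shrinking Proposition~\ref{ExpShr} (applied with $n=1$) is at most $C_E\lambda_E^{-1}\varrho(\uul_j,\uul_{j+1})$, and for $1\le i\le k$ the term equals $\varrho(\sigma^{i-1}\uul_j,\sigma^{i-1}\uul_{j+1})\le\varrho_{k-1}(\uul_j,\uul_{j+1})<\delta'$. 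This gives a chain from $a\hat\sul=\sul$ to $a\hat\tul=\tul$ of length $\ell^{k-1}\le\ell^k$; padding with repeated copies of $\tul$ at the end brings the length to exactly $\ell^k$ without affecting the step bound (consecutive equal terms have $\varrho_k=0<\delta'$).

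The one point that needs care — and I expect it to be the main technical obstacle — is the bound on the $i=0$ coordinate after prepending the symbol $a$. Prepending $a$ is applying the inverse branch $g_{S_a}$, and $\varrho(a\uul_j,a\uul_{j+1})$ could in principle be \emph{larger} than $\varrho(\uul_j,\uul_{j+1})$ if the relevant inverse branch is expanding at that location; this is precisely why one cannot simply quote $\varrho(a\uul_j,a\uul_{j+1})\le\varrho(\uul_j,\uul_{j+1})$. The fix is to use that, by Proposition~\ref{ExpShr}, prepending \emph{any} single symbol contracts by the factor $C_E\lambda_E^{-1}$ \emph{provided} $\varrho(\uul_j,\uul_{j+1})<\delta_0$, which holds since $\delta'\le\delta\le\delta_0$; and then one needs $C_E\lambda_E^{-1}\delta'<\delta'$, i.e. $C_E<\lambda_E$. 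If $C_E\ge\lambda_E$ one instead peels off $n_0$ symbols at a time rather than one (as in Lemma~\ref{TresLemas}, using $C_E\lambda_E^{-n_0}\le\min\{1,1/C_E\}$), replacing the recursion $\ell^k$ by $\ell^{\lceil k/n_0\rceil}$-many steps — but since the statement only asserts existence of \emph{some} such finite sequence with the exponent literally $\ell^k$, one can alternatively absorb the extra steps by enlarging $\ell$ at the outset (replace $\ell$ by $\ell^{n_0}$, say), so that the clean exponent $\ell^k$ survives. Either way the structure is the same: reduce from $\B_k$ to $\B_{k-1}$ by removing initial symbols, apply the inductive hypothesis, reattach symbols using exponential shrinking to keep every step below $\delta'$, and pad the chain to the required length.
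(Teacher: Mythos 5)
Your overall scheme --- induction on $k$, peeling the first symbol off, applying the inductive hypothesis to $\sigma(\sul),\sigma(\tul)\in\B_{k-1}(\sigma(\wul),\delta)$, and then prepending --- is the same as the paper's, and the gap sits exactly at the point you flag; unfortunately neither of your proposed repairs closes it. Nothing in the paper gives $C_E<\lambda_E$: $C_E$ is a distortion-type constant, and the whole $\delta$ versus $\delta'=\min\{\delta,\delta/C_E\}$ bookkeeping (and the choice of $n_0$ with $C_E\lambda_E^{-n_0}\le\min\{1,1/C_E\}$ in Lemma~\ref{TresLemas}) exists precisely because $C_E$ may be large. Peeling off $n_0$ symbols at a time does not help: if $u^*\in\Z^{n_0}$ is reattached to a chain whose $\varrho_{k-n_0}$-steps are $<\delta'$, then coordinate $i$ of a link, $0\le i\le n_0-1$, is only bounded by $C_E\lambda_E^{-(n_0-i)}\delta'$, and for $i=n_0-1$ this is again $C_E\lambda_E^{-1}\delta'$, possibly larger than $\delta'$; this is exactly why Lemma~\ref{TresLemas}, item $2$, only delivers $u^*\B_m(\cdot,\delta')\subseteq\B_{m+n}(u^*\cdot,\delta)$ --- a $\delta$-ball, not a $\delta'$-ball. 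Finally, enlarging $\ell$ at the outset is irrelevant: the obstruction is not the number of links in the chain but the size of the individual links in the early coordinates, and padding with repeated copies of $\tul$ changes neither.

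The missing ingredient is the one that actually produces the exponent $\ell^k$. After prepending, consecutive points $u^*\uul_j$, $u^*\uul_{j+1}$ are within $\delta'$ in coordinates $\ge 1$ but only within $\delta$ in coordinate $0$, so the paper invokes Proposition~\ref{path} once more on each such pair to subdivide it into $\ell$ sub-links of $\varrho$-size $<\delta'$. This re-subdivision multiplies the chain length by $\ell$ at every level of the induction, which is why the statement reads $\ell^k$ rather than $\ell$. Your construction, by contrast, keeps only $\ell$ genuine links and pads the rest; that would prove a strictly stronger, $k$-independent version of the lemma, which is itself a signal that the contraction step cannot be made to work as written.
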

\begin{proof}
We will proceed  by induction  in $k$. By assumption $(\Path)$,
the desired assertion is satisfied  for  $k=1$.
 Suppose that this is true for $k>1$. For $\aul,\bul\in
\B_{k+1}(\dul,\delta)$
 we have $\sigma(\aul),\sigma(\bul)\in  \B_k(\sigma(\dul),\delta)$, thus
 there exists a sequence $\sigma(\aul)=\cul_0,\cdots,\cul_{\ell^k}=\sigma(\bul)$
 such that for every $j\in\{0,\cdots,\ell^k-1\}$ we have
$$\varrho_k(\cul_j,\cul_{j+1})< \delta'.$$
Then, let $c^*$ the first word from $\dul$ such that by item $1$
of Lemma~\ref{TresLemas}
$c^*\B_0(\sigma(\dul),\delta')\subset\B_0(c^*\,\sigma(\dul),\delta)$
and for all $j=0,1,\cdots, \ell^k-1$,
$$\varrho(c^*\cul_{j},c^*\cul_{j+1})<\delta.$$
Then for every $j=0,\cdots,\ell^k-1$\,  there exists a sequence
$$c^*\cul_j=\dul^j_0,\dul^j_1,\cdots,\dul^j_\ell=c^*\cul_{j+1},$$ such
that for every
  $0\leq r\leq
\ell-1$ we have
 $\varrho(\dul^j_r,\dul^j_{r+1})<\delta'$.
So there is a path
$$\aul=\dul^0_0,\cdots, \dul^{0}_{\ell}, \dul^1_0,\cdots,d^1_\ell,\cdots,\dul^{\ell^k-1}_1, \cdots, \dul^{\ell^k-1}_\ell=\bul, $$
such that for all $0\leq j\leq \ell^k-1$  and $ 0\leq r\leq
\ell-1$ we have
$$\varrho_{k+1}(\dul^{j}_{r},\dul^{j}_{r+1})<\delta' .$$
\end{proof}

\subsection{Potentials}\label{pot}
  For the rest of this paper
we fix  $\alpha\in (0,1]$ and $\delta\in (0,\delta_0]$. Given
$\delta_1>0$.
 We say that
a function $\phi\in C(X,\R)$ is \emph{uniformly $\delta_1$-locally
$\alpha$-H\"{o}lder} $(\ULH)$ if,
 there exists $L\geq 0$ such that.
\begin{quote}
For all $\aul,\bul, \cul\in X$, satisfying $\aul,\bul\in
\B_0(\cul,\delta)$
  %$\varrho(\underline{a},\underline{b})<\delta_1$
   we have
   \begin{displaymath}
    \qquad \qquad\qquad     |\phi(\underline{a})-\phi(\underline{b})|\leq
   L(\varrho(\underline{a},\underline{b}))^\alpha.
  \end{displaymath}
  \end{quote}
We say that  a function $\phi$ is \emph{locally $\alpha$-H\"older}
if there is $\delta_1>0$ such that  $\phi$ is $\delta_1$-locally
$\alpha$-H\"older.

We denote by  $H_{\alpha,\delta}$ the vector
 space of all bounded uniformly $\delta$-locally $\alpha$-H\"{o}lder
potentials. We endow  $H_{\alpha,\delta}$ with the norm
$\|\cdot\|_{\alpha,\delta}$ defined as follows.
 First, we define the $\alpha,\delta$-\emph{variation} of $\phi$ by
 \begin{eqnarray*}
  v_{\alpha,\delta}(\phi):=
  \inf\left\{L\geq 0: \textrm{ for all } \aul,\bul\in X, \right.\\
\qquad\qquad \qquad \left.  \textrm{ if }
   \varrho(\aul,\bul)<\delta\textrm{ then }|\phi(\aul)-\phi(\bul)|\leq L\varrho(\aul,\bul)^{\alpha}\right\},
\end{eqnarray*}
and $\|\phi\|_\infty=\sup\left\{|\phi(\aul)|: \aul\in X\right\}$.
Then the norm $\|\cdot\|_{\alpha,\delta}$ on $H_{\alpha,\delta}$
is defined by
\begin{displaymath}
  ||\phi||_{\alpha,\delta}:=v_{\alpha,\delta}(\phi)+||\phi||_{\infty}.
\end{displaymath}
The vector space $H_{\alpha,\delta}$ endowed with the norm
$||\cdot||_{\alpha,\delta}$ is a Banach space.

 For every potential $\phi\in C(X,\R)$ and for every integer
$n\geq 0$  we recall that the  $n$-\emph{variation} of $\phi$ is
defined by
\begin{equation}
\label{eq:Vnphi}
  \Var_n(\phi):=\sup_{\underline{a}\in X}\,
  \sup_{\underline{b},\underline{c}\in \B_n(\underline{a},\delta)}
  |\phi(\underline{b})-\phi(\underline{c})|.
\end{equation}
We say  $\phi\in C(X,\mathbb{R})$  is  \emph{weakly H\"{o}lder
continuous} if it satisfies the following condition.
\begin{quote}
 There exist $C>0$ and $0<r<1$ such that, for all
$n\geq 0$ we have
  \begin{displaymath}
\qquad\qquad \quad    \Var_n(\phi)\leq C r^n.
  \end{displaymath}
\end{quote}
We denote by $\BV(X,\mathbb{R})$  the space of all  weakly
H\"{o}lder continuous potentials. We put
\begin{eqnarray*}
\BV_r(X,\mathbb{R}):= \left\{\phi\in C(X,\mathbb{R}):
\Var_n(\phi)\leq
Cr^n, \right.\\
\qquad\quad\qquad\qquad\qquad\left. \textrm{ for all } n\geq
0,\textrm{ and for some }C\geq 0\right\},
 \end{eqnarray*}
and for $\phi\in \BV_r(X,\mathbb{R})$, let
$$w_r(\phi):=\sup\left\{\displaystyle\frac{\Var_n(\phi)}{r^n}: n\geq
1\right\},$$ and $$\|\phi\|_r=\|\phi\|_\infty+ w_r(\phi).$$
\begin{prop}
The function $\|\cdot\|_r$ is a norm on $\BV_r(X,\mathbb{R})$ that
 makes  it a  Banach space.
\end{prop}
\begin{proof}
 Follows easily that $\|.\|_r$ is a norm in $BV(X,\mathbb{R})$.
 %In
%fact. It is easy to see that if  $\phi=0$, then $\|\phi\|_r=0$.
%Conversely if $\|\phi\|_r=0$ then $\|\phi\|_\infty+ w_r(\phi)=0.$
%Since $\|.\|_\infty$ is a norm in %$C(X,\mathbb{R})$ and
%$\|\phi\|_\infty\geq 0$, $w_r(\phi)\geq 0$ %then $\phi=0.$
%Let $\beta\in \C$ and $\phi\in C(X,\mathbb{R})$ then we have
 %$\|\beta\phi\|_\infty=|\beta|\|\phi\|_\infty$ and
 %$w_r(\beta\phi)=|\beta|w_r(\phi)$. Thus
 %$\|\beta\phi\|_r=|\beta|\|\phi\|$.
 %Finally,  for  all $\phi, \varphi $ in
 %$C(X,\mathbb{R})$,  $V_n(\phi+\varphi)\leq %V_n(\phi)+V_n(\varphi)$,
 %then $w_r(\phi+\varphi)\leq %w_r(\phi)+w_r(\varphi)$. Therefore
 % the triangular inequality is satisfied.
% $\|\phi+\varphi\|_r\leq \|\phi\|_r+\|\varphi\|_r$.\\
To prove that  $BV(X,\mathbb{R})$ is a Banach space with
 the norm $\|.\|_r$, let $\{\phi_{_{(n)}}\}_{n\geq 1}$ be a  Cauchy
 sequence in $BV(X,\mathbb{R})$. So for every $\epsilon >0$,
 there exists $N\in \mathbb{N}$ such that for all $m,n\geq N$
% $\|\phi_{_{(n)}}-\phi_{_{(m)}}\|_r<\epsilon$. It means
 $\|\phi_{_{(n)}}-\phi_{_{(m)}}\|_\infty+w_r(\phi_{_{(n)}}-\phi_{_{(m)}})_r<\epsilon$.
  Since $\|.\|_\infty\leq \|.\|_r$ and $w_r(.)\leq\|.\|_r$, then
  $\{\phi_{_{(n)}}\}_{n\geq 1}$ is a Cauchy sequence with respect to the
  norm   $\|.\|_\infty$ (this implies  there exists  $\phi\in C(X, \mathbb{R})$ such that $\phi_{_{(n)}}\rightarrow \phi$, $n\rightarrow\infty$)
   and  the sequence $\{w_r(\phi_{_{(n)}})\}_{n\geq
  1}$ is also a Cauchy sequence.

  Observe first that $\phi\in BV(X,\mathbb{R})$.  Let $\epsilon>0$
  then there exists $N$ such that  for all $m\geq N$
  $|\phi_{_{(m)}}(\bul)-\phi(\bul)|<\epsilon$, for all $\bul\in
  X.$ Then for all $\bul,\cul\in \B_n(\aul,\delta)$ with $\aul\in X$
  we have
  \begin{multline*}
|\phi(\bul)-\phi(\cul)|\leq
|\phi(\bul)-\phi_{_{(m)}}(\bul)|+|\phi_{_{(m)}}(\bul)-\phi_{_{(m)}}(\cul)|+
|\phi_{_{(m)}}(\cul)-\phi(\cul)|\\
\leq \frac{\epsilon}{2}+
C_{_{(m)v}}r^n_{_{(m)v}}+\frac{\epsilon}{2}=C_{_{(m)v}}r^n_{_{(m)v}}+\epsilon
  \end{multline*}

  Let $C_{_{(N)r}}=\max\{C_{_{(m)r}}: m\geq N\}$ and $r_{_{(N)r}}=\max\{r_{_{(m)r}}: m\geq
  N\}$, then in particular we have
$$|\phi(\bul)-\phi(\cul)|\leq C_{_{(N)v}}r^n_{_{(N)v}}+\epsilon.$$
Therefore
$$V_n(\phi)\leq C_{_{(N)v}}r^n_{_{(N)v}}, \text{ for all  } n\geq 1.$$
\end{proof}

\begin{lem}\label{lem:delta1}
 Every uniformly $\delta$-locally $\alpha$-H\"older
continuous potential is weakly H\"older continuous with constant
$r=\lambda_{\ES}^{-\alpha}$.
\end{lem}
\begin{proof}
By part $1$ of Lemma~\ref{TresLemas} with $c^*\aul$ replaced by
$\aul$ and with  $m=0$, we obtain
$$\B_{n}(\aul,\delta)\subseteq\B_0(\aul,C_{\ES}\lambda_{\ES}^{-n}\delta).$$
So, if $\phi$ is uniformly $\delta$-locally $\alpha$-H\"older,
then we have that
 we have that there exists $L\geq 0$ such that for all $n\geq
0$, $\aul\in X$ and
  $\bul,\cul \in \B_n(\underline{a},\delta)$ we have
  \begin{displaymath}
    |\phi(\bul)-\phi(\cul)|\leq L\varrho(\bul,\cul)^{\alpha}\leq
    L(C_{\ES}\lambda_{\ES}^{-n})^{\alpha}\delta^\alpha=LC_{\ES}^{\alpha}\delta^\alpha
    (\lambda_{\ES}^{-\alpha})^{n}.
  \end{displaymath}
\end{proof}
%\begin{cor}\label{lem:delta2}
%Let $\phi$ be a potential satisfying. There is $L\geq 0$,  such
%that for every $\aul,\bul\in \B_0(\cul,\delta)$, then
%$|\phi(\aul)-\phi(\bul)|\leq L\varrho(\aul,\bul)^\alpha$. Then
%$\phi$ is weakly  H\"older continuous with constant
%$r=\lambda_{\ES}^{-\alpha}$.
%\end{cor}

 For every $n\geq 1$ put
 $$S_n \phi =\sum_{k=0}^{n-1}\phi\circ \sigma^{k}.$$
If $n=0$, then put  $\phi_0\equiv 0$.
\begin{lem}\label{lem:varrho}
Let $\phi\in \BV_r(X,\mathbb{R})$, and assume that $(\Delt)$
holds. Then for all $n\geq 0$
 we have the following
 assertions.
  \begin{itemize}
  \item[1.] For all $m\geq 0$
  we have
\begin{displaymath}
\qquad\quad V_{m+n}(S_n \phi )\leq  \left(
w_r(\phi)\frac{r^n}{1-r}\right)r^m.
\end{displaymath}
  \item[2.] For every $c^*\in \Z^n$ and
   for all $\aul\in X$, let
  \begin{displaymath}
\qquad\qquad\qquad \phi_{n,c^*}(\aul):=S_n \phi (c^*\aul),
 \end{displaymath}
then $\phi_{n,c^*}\in \BV_r(X,\mathbb{R})$.
  \end{itemize}
\end{lem}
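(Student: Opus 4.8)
The proof proceeds by reducing both assertions to the defining bound $\Var_n(\varphi)\le \mathcal{W}_r(\varphi)\, r^n$ together with the nesting properties of the sets $\B_n(\sul,\delta)$ recorded in~(\ref{pordefi1}),~(\ref{pordefi2}) and Lemma~\ref{TresLemas}. For part~1, fix $\sul\in X$ and $\tul,\uul\in\B_{m+n}(\sul,\delta)$. Write $S_n(\varphi)(\tul)-S_n(\varphi)(\uul)=\sum_{k=0}^{n-1}\big(\varphi(\sigma^k\tul)-\varphi(\sigma^k\uul)\big)$. The key observation is that, by~(\ref{pordefi1}), applying $\sigma^k$ to $\B_{m+n}(\sul,\delta)$ lands inside $\B_{m+n-k}(\sigma^k\sul,\delta)$, so that $\sigma^k\tul,\sigma^k\uul\in\B_{m+n-k}(\sigma^k\sul,\delta)\subseteq\B_{m+(n-k)}(\sigma^k\sul,\delta)$; hence each term is bounded by $\Var_{m+n-k}(\varphi)\le \mathcal{W}_r(\varphi)\,r^{m+n-k}$. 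Summing over $k=0,\dots,n-1$ gives
\begin{displaymath}
|S_n(\varphi)(\tul)-S_n(\varphi)(\uul)|\le \mathcal{W}_r(\varphi)\sum_{k=0}^{n-1} r^{m+n-k}
= \mathcal{W}_r(\varphi)\, r^{m+1}\frac{1-r^{n}}{1-r}\le \left(\mathcal{W}_r(\varphi)\frac{r^n}{1-r}\right) r^m,
\end{displaymath}
where in the last step one bounds $r^{m+1}(1-r^n)/(1-r)\le r^m\cdot r^n/(1-r)$ after rewriting the geometric sum with largest term $r^{m+1}$; taking the supremum over $\sul,\tul,\uul$ yields the claimed inequality for $V_{m+n}(S_n(\varphi))$.

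For part~2, fix $u^*\in\Z^n$ and set $\varphi_{n,u^*}(\sul):=S_n(\varphi)(u^*\sul)$. Boundedness is immediate from $\|\varphi_{n,u^*}\|_\infty\le n\|\varphi\|_\infty$, so it remains to estimate $\Var_m(\varphi_{n,u^*})$ for each $m\ge 0$. Given $\sul\in X$ and $\tul,\uul\in\B_m(\sul,\delta)$, use~(\ref{pordefi2}) (equivalently the first inclusion of Lemma~\ref{TresLemas}) to get $u^*\tul,u^*\uul\in u^*\B_m(\sul,\delta)$, and then the second inclusion of Lemma~\ref{TresLemas}, namely $u^*\B_m(\sul,\delta)\subseteq\B_m(u^*\sul,C_E\lambda_E^{-n}\delta)\subseteq\B_m(u^*\sul,\delta)$ (since $C_E\lambda_E^{-n}\le 1$ for $n$ large, and otherwise one simply absorbs the constant). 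Thus $u^*\tul,u^*\uul$ lie in a common $\B_m(u^*\sul,\delta)$-neighbourhood, so $|\varphi_{n,u^*}(\tul)-\varphi_{n,u^*}(\uul)|=|S_n(\varphi)(u^*\tul)-S_n(\varphi)(u^*\uul)|$; applying the same telescoping argument as in part~1 (now the relevant index for the $k$-th term is $\Var_{m+n-k}(\varphi)$, using that $\sigma^k(u^*\tul)$ and $\sigma^k(u^*\uul)$ agree with $\sigma^k(u^*\sul)$ in the right sense) gives $\Var_m(\varphi_{n,u^*})\le \mathcal{W}_r(\varphi)\,\frac{r^n}{1-r}\,r^m = C\, r^m$ with $C$ independent of $m$. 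Hence $\mathcal{W}_r(\varphi_{n,u^*})<\infty$ and $\varphi_{n,u^*}\in\BV_r(X,\R)$.

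The only delicate point is bookkeeping of \emph{which} ball the shifted points lie in: one must be careful that $\sigma^k$ applied to points that are $\B_{m+n}$-close (resp.\ $\B_m$-close after prepending $u^*$) produces points that are close in the sense of $\B_j$ for the correct $j$, so that the weakly H\"older bound $\Var_j(\varphi)\le\mathcal{W}_r(\varphi)r^j$ can be invoked with the right exponent. This is exactly what~(\ref{pordefi1}),~(\ref{pordefi2}) and Lemma~\ref{TresLemas} are designed to supply, so the argument is routine once these are in place; I expect no genuine obstacle beyond this indexing care and the elementary geometric-series estimate.
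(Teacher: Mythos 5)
Your overall strategy is the same as the paper's: telescope $S_n(\varphi)$, use the nesting property~(\ref{pordefi1}) to place $\sigma^j\tul,\sigma^j\uul$ in $\B_{m+n-j}(\sigma^j\wul,\delta)$, bound each term by $\Var_{m+n-j}(\varphi)\le\mathcal{W}_r(\varphi)r^{m+n-j}$, and sum the geometric series; part~2 is then reduced to part~1 by prepending $u^*$. However, your justification of the final inequality in part~1 is wrong (the same slip occurs in the paper's own proof). You correctly compute
\begin{displaymath}
\sum_{j=0}^{n-1}r^{m+n-j}=r^{m+1}\,\frac{1-r^{n}}{1-r},
\end{displaymath}
but the asserted bound $r^{m+1}(1-r^{n})/(1-r)\le r^{m+n}/(1-r)$ amounts to $r(1-r^{n})\le r^{n}$, which is false whenever $r^{\,n-1}(1+r)<1$; for instance $r=1/2$, $n=2$ gives $3/8>1/4$. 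The geometric sum is governed by its \emph{largest} term $r^{m+1}$, not its smallest term $r^{m+n}$, so the factor $r^{n}$ in the statement cannot be extracted this way. What the argument actually yields is $V_{m+n}(S_n(\varphi))\le\bigl(\mathcal{W}_r(\varphi)\frac{r}{1-r}\bigr)r^{m}$, a bound uniform in $n$; this weaker estimate is all that part~2 and the later Lemmas~\ref{lem:eq5} and~\ref{lem:ConsK} actually need, so the defect is in the stated constant rather than in its downstream use.

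In part~2 there is a genuine indexing gap. To run the part-1 telescoping with the indices $\Var_{m+n-k}$ you need $u^*\tul,u^*\uul\in\B_{m+n}(u^*\sul,\delta)$, i.e.\ closeness along all of the first $m+n$ shifts. The inclusion you invoke, $u^*\B_m(\sul,\delta)\subseteq\B_m(u^*\sul,C_E\lambda_E^{-n}\delta)\subseteq\B_m(u^*\sul,\delta)$, only gives membership in a $\B_m$-set; this controls $\sigma^k$ of the concatenated points only for $k\le m$ and yields at best $\Var_{m-k}$ for the $k$-th term, hence a constant of order $r^{-n}$ rather than one that is uniform in $n$ (uniformity is what Lemma~\ref{lem:ConsK} requires). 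The paper closes this gap differently: starting from $\sul,\tul\in\B_m(\wul,\delta')$ with $\delta'=\min\{\delta,\delta/C_E\}$, part~2 of Lemma~\ref{TresLemas} gives $u^*\B_m(\wul,\delta')\subseteq\B_{m+n}(u^*\wul,\delta)$, part~1 then applies with the index $m+n$, and Proposition~\ref{weakly} upgrades $(\delta',r)$-weak H\"older continuity back to $(\delta,r)$-weak H\"older continuity. You flag exactly this bookkeeping as the delicate point but do not carry it out, and the particular inclusion you chose does not deliver it.
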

\begin{proof}
$\mathbf{1.}$ Let  $\dul\in X$ and $m\geq 0$. By property
(\ref{pordefi1}) we have that for each  $j=0,1,\cdots,n-1$,
$$\sigma^j\B_{m+n}(\dul,\delta)\subset \B_{m+n-j}(\sigma^j\dul,\delta).$$
  Then, for each $\aul,\bul\in\B_{m+n}(\dul,\delta)$ we have
  \begin{multline}\label{phi_nc}
\left|S_n \phi (\aul)-S_n \phi (\bul)\right|\leq\sum_{j=0}^{n-1}\left|\phi(\sigma^j\aul)-\phi(\sigma^j\bul)\right|
  \\  \leq \sum_{j=0}^{n-1}V_{m+n-j}(\phi)
 \leq
    w_r(\phi)\sum_{j=0}^{n-1} r^{(m+n-j)}
   \\  \leq w_r(\phi)r^{m+n}\frac{1}{1-r}
   = \left( w_r(\phi)\frac{r^n}{1-r}\right)r^m.
  \end{multline}
$\mathbf{2.}$  Let $n\geq 0$,\, $c^*\in \Z^n$ and $m\geq 0$.
\noindent  We have to prove that $\phi_{n,c^*}$ is
$(\delta,r)$-weakly H\"older, however by condition $(\Delt)$  is
enough to show that $\phi_{n,c^*}$ is $(\delta',r)$-weakly
H\"older. In fact, let $\dul\in X$ and  $\aul,\bul\in
\B(\dul,\delta')$. By part $2$ of Lemma~\ref{TresLemas} we have
$$c^*\B_{m}(\dul,\delta')\subseteq\B_{m+n}(c^*\dul,\delta).$$
Hence, using the estimation (\ref{phi_nc}) of part 1 of this lemma
we get, for $c^*\aul,c^*\bul\in \B_{m+n}(c^*\dul,\delta)$
$$\left|S_n \phi (c^*\aul)-S_n \phi (c^*\aul)\right|\leq \left(w_r(\phi)\frac{r^n}{1-r}\right)r^m.$$
It is the same  to have
$$\left|\phi_{n,c^*}(\aul)-\phi_{n,c^*}(\aul)\right|\leq \left(w_r(\phi)\frac{r^n}{1-r}\right)r^m.$$
Thus it follows that   the function $\phi_{n,c^*}$ is
$(\delta,r)$-weakly H\"older continuous.
\end{proof}
\begin{lem}\label{lem:eq5}
Let $\phi\in \BV_r(X,\mathbb{R})$.
 Then
for all $n\geq 1$, $c^*\in \Z^n$, $m\geq
 1$, $\dul\in X$ and $\aul,\bul\in
\B_{m}(\dul,\delta)$, if  we put $K:=w_r(\phi_{n,c^*})$, then
  \begin{equation*}
\qquad\qquad\qquad
\left|e^{S_n \phi (c^*\aul)}-e^{S_n \phi (c^*\bul)}\right|\leq
e^{K}\left(e^{S_n \phi (c^*\bul)}\right ).
  \end{equation*}
\end{lem}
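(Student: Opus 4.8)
The plan is to reduce the claimed inequality to a one-line application of the elementary estimate $|e^a - e^b| \le e^{|a-b|} e^{b}$, which holds for all real numbers $a,b$ (indeed, $|e^{a-b}-1| \le e^{|a-b|}-1 \le e^{|a-b|}$, and then multiply through by $e^b$). So it suffices to show that the exponent difference $|S_n(\varphi)(u^*\sul) - S_n(\varphi)(u^*\tul)|$ is bounded above by $K := \mathcal{W}_r(\varphi_{n,u^*})$.

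First I would recall from part~2 of Lemma~\ref{lem:varrho} that, since $\varphi \in \BV_r(X,\mathbb{R})$, the function $\varphi_{n,u^*}(\sul) = S_n(\varphi)(u^*\sul)$ is itself $(\delta,r)$-weakly H\"older continuous, hence lies in $\BV_r(X,\mathbb{R})$, and in particular $K = \mathcal{W}_r(\varphi_{n,u^*}) < \infty$. By definition of $\mathcal{W}_r$ and of $\Var_m$ (Definition~\ref{weak}), for every $m \ge 1$, every $\wul \in X$, and every $\tul,\uul \in \B_m(\wul,\delta)$ one has $|\varphi_{n,u^*}(\tul) - \varphi_{n,u^*}(\uul)| \le \Var_m(\varphi_{n,u^*}) \le \mathcal{W}_r(\varphi_{n,u^*}) \, r^m \le K$, using $r \in (0,1)$ and $m \ge 1$ so that $r^m \le 1$. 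Applying this with our given $\sul,\tul \in \B_m(\wul,\delta)$ gives exactly
$$
\left| S_n(\varphi)(u^*\sul) - S_n(\varphi)(u^*\tul) \right| = \left| \varphi_{n,u^*}(\sul) - \varphi_{n,u^*}(\tul) \right| \le K.
$$

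Finally I would combine the two ingredients: set $a = S_n(\varphi)(u^*\sul)$ and $b = S_n(\varphi)(u^*\tul)$, so that $|a-b| \le K$, and then
$$
\left| e^{S_n(\varphi)(u^*\sul)} - e^{S_n(\varphi)(u^*\tul)} \right| = |e^a - e^b| \le e^{|a-b|} e^{b} \le e^{K} e^{S_n(\varphi)(u^*\tul)},
$$
which is the desired bound. There is essentially no obstacle here; the only point requiring care is to invoke Lemma~\ref{lem:varrho}(2) to guarantee $K < \infty$ and to make sure the weak-H\"older estimate is applied at level $m \ge 1$ (as hypothesized) so that $r^m \le 1$ lets us drop the $r^m$ factor cleanly. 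One could alternatively keep the sharper bound $e^{K r^m} e^{S_n(\varphi)(u^*\tul)}$, but the stated form with $e^K$ is all that is needed downstream.
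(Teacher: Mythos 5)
Your proposal is correct and follows essentially the same route as the paper: both reduce to the bound $|\varphi_{n,u^*}(\sul)-\varphi_{n,u^*}(\tul)|\leq \mathcal{W}_r(\varphi_{n,u^*})\,r^m\leq K$ furnished by Lemma~\ref{lem:varrho}(2), and then convert this into the exponential estimate (the paper bounds $|e^{a-b}-1|$ by $e^K-1$, you by $e^{|a-b|}\le e^K$; both yield the stated inequality after multiplying by $e^{S_n(\varphi)(u^*\tul)}$).
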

\begin{proof}
Let $m\geq 1$, $\dul\in X$ and \,  $\aul,\bul\in
\B_{m}(\dul,\delta)$. By item 2 of Lemma~\ref{lem:varrho}, we have
for $n\geq 1$ and $c^*\in \Z^n$ the function $\phi_{n,c^*}\in
\BV_r(X,\mathbb{R})$. Thus
\begin{displaymath}
  |S_n \phi (c^*\aul)-S_n \phi (c^*\bul)|\leq w_r(\phi_{n,c^*})r^m\leq w_r(\phi_{n,c^*})=K.
\end{displaymath}
Then
\begin{displaymath}
  e^{S_n \phi (c^*\aul)-S_n \phi (c^*\bul)}-1\leq e^{K}-1,
\end{displaymath}
and  \begin{displaymath}
  1-e^{-(S_n \phi (c^*\bul)-S_n \phi (c^*\aul))}\leq
  1-e^{-K} \leq e^{K}-1.
\end{displaymath}
Therefore $$|e^{S_n \phi (c^*\aul)-S_n \phi (c^*\bul)}-1|\leq e^K-1.$$
\end{proof}
 In the following proposition we prove that  the  condition $(\Delt)$ follows   immediately as consequence of the
assumption $(\Path)$.
\begin{prop}\label{weakly}%Let $\delta\in (0,\delta_0]$ and
Let $\delta'=\min\{\delta,\delta/C_{\ES}\}$. If  the metric
$\varrho$ satisfies the condition $(\Path)$ then   $\phi$
satisfies the condition $(\Delt)$,  it means   if $\phi$ is
$(\delta',r)$-weakly H\"older then $\phi$ is $(\delta,r)$-weakly
H\"older.
\end{prop}
\begin{proof}
Let $\phi$ be a $(\delta',r)$-weakly H\"older potential. Let
$\dul\in X$, $m\geq 0$ and $\aul,\bul\in \B_m(\dul,\delta)$. We
have by part  $1$  of Lemma~\ref{TresLemas} that there exists
$n_0$ such that  for all $n\geq 0$
$$\B_{n+n_0}(\dul,\delta)\subset \B_n(\dul,\delta').$$
If $m\geq n_0$ then $\B_m(\dul,\delta) \subseteq
\B_{m-n_0}(\dul,\delta')$. Thus  we have
\begin{displaymath}
 |\phi(\aul)-\phi(\bul)|\leq w_r(\phi) r^{m-n_0}= \left(w_r(\phi)
r^{-n_0}\right)r^{m}.
\end{displaymath}
If  $1<m<n_0$, then  by  condition  $(\Path)$  we  have  that
Lemma~\ref{lem:path}
 is satisfied. So,  for all
$\aul,\bul\in \B_{m}(\dul,\delta)$,  there is a sequence
$\aul=\cul_0,\cdots,\cul_{\ell^m}=~\bul$  such that for all
$j=0,\cdots,\ell^m-1$ we have
$\varrho_m(\cul_j,\cul_{j+1})\leq\delta'$. Hence
$$|\phi(\aul)-\phi(\bul)|\leq\sum_{j=0}^{\ell^m-1}|\phi(\cul_j)-\phi(\cul_{j+1})|
\leq (\ell^m w_r(\phi))r^m.$$
\end{proof}
\subsection*{Topological Pressure}\label{PreTopological}
 From now on we assume that the metric $\varrho$  given in the introduction
satisfies $(\Conn)$ and that any  potential $\phi$ is weakly
H\"older continuous potential.

 Let  $\Lambda$  be a compact  subset of $X$
such that $\sigma(\Lambda)=\Lambda$, let  $\phi\in~\BV_r(X,\R)$
and $\aul\in X$. Set
$$ \displaystyle Z_n(\Lambda,\phi,\underline{a}):=\sum_{c^*\in \Z^n\\
 c^*\aul\in \Lambda}e^{S_n \phi (c^*\aul)}.$$
The standard  topological pressure of $\phi$  on $\Lambda$
  is given   by
\begin{equation}\label{pres}
P_{\Top}(\phi|_{\Lambda})= \lim_{n\rightarrow\infty}\frac{1}{n}
\log\left(Z_n(\Lambda,\phi,\underline{a})\right),
\end{equation}
see~\cite{PU10}).  Since $X$ is not compact,
 we define the
 %we do not have a well-defined
% pressure $P(\phi)$ of a
%as in~(\ref{pres}). So
   \emph{ pressure} $P(\phi)$ of  a potential
   %weakly H\"older
 %potentials
   $\phi\in \BV_r(X,\R)$  on $X$
 with respect to $\sigma$ as follows
\begin{equation}\label{topol.press}
P(\phi):=\sup_{N\geq 1} P_{\Top}\left(\phi|_{\Sigma^+_N}\right),
\end{equation}
where for
 each $N\geq 1$
 $$P_{\Top}\left(\phi|_{\Sigma^+_N}\right):=\lim_{n\rightarrow\infty}
\frac{1}{n}\log\left(Z_n(\Sigma^+_N,\phi,\underline{a})\right).
$$ 
%(compare~\cite{Bal} and  \cite{Zin}).
\begin{prop}
The following assertion holds
$$\sup\left\{P_{\Top}\left(\phi|_{\Lambda}\right):
 \Lambda\subseteq X  \textrm{ is compact, } \sigma(\Lambda)=\Lambda\right\}
=\sup_{N\geq 1 } P_{\Top}\left(\phi|_{\Sigma^+_N}\right).$$
\end{prop}
\begin{proof}
Let $\Lambda$ be a compact subset of $X$ such that
$\sigma(\Lambda)=\Lambda$. Then
%as in the introduction
% by the assumption
%(\ref{asumption}) given in the introduction
 there exists $N\geq 1$ such that $\Lambda\subset\Sigma^+_N$ (see hypothesis of $X$ in the introduction).
 Hence for every
$\aul\in \Lambda$ we have
$$Z_n\left(\Lambda,\phi,\underline{a}\right)
\leq Z_n\left(\Sigma^+_N,\phi,\underline{a}\right).$$
  Therefore
$ P_{\Top}\left(\phi|_{\Lambda}\right)\leq\sup_{N\geq 1}
P_{\Top}\left(\phi|_{\Sigma^+_N}\right)$. Since it is satisfied
for every
 compact and  $\sigma$-invariant set $\Lambda$,  we have
$$P(\phi)=\sup \left\{P_{\Top}(\phi|_{\Lambda}):
 \Lambda\subseteq X  \textrm{ is  compact, } \sigma(\Lambda)=\Lambda\right\}.$$
\end{proof}
For the rest of this section  we assume that  each $\phi\in
\BV_r(X,\R)$ satisfies the following conditions
\begin{itemize}
\item[-]\emph{Summable} condition, that is
\begin{equation*}%\label{sumable}
\sup_{\aul\in
X}\left\{\sum_{\bul:\sigma(\bul)=\aul}\exp(\phi(\bul))\right\}
<\infty.
\end{equation*}
\item[-]
 \emph{Bounded on balls}. That is, if for all $R>0$ we
have
\begin{equation*}%\label{balls}
\sup_{\cul\in B(\zul,R)}|\phi(\cul)|<\infty.
\end{equation*}
\end{itemize}
Thus, for every  $g\in \Cb_b(X,\R)$ we have that
 \emph{the transfer operator } $\mathscr{L}_{\phi}$  associated to the potential
 $\phi$ which is given by

 $$  \mathscr{L}_\phi(g)(\aul)=\sum_{\bul:\sigma(\bul)=\aul}e^{\phi(\bul)}g(\bul),
$$
 is well defined  on
 %on the set  of real bounded continuous
 %functions on $X$, denote it by
 $\Cb_b(X,\mathbb{R})$,
 %where $g\in \Cb_b(X,\mathbb{R})$.
  In
particular when $g$ is the function identically equal to $1$,
 we have
$$\mathscr{L}_\phi(\mathds{1})(\aul)=\sum_{\bul:\sigma(\bul)=\aul}e^{\phi(\bul)}.$$
Notice  that for every $n\geq 1$ and $\aul\in X $ the iterates of
 $\mathscr{L}_{\phi}$ are given by
\begin{displaymath}
\mathscr{L}^n_\phi(g)(\aul)=\sum_{c^*\in \Z^n}
e^{S_n \phi (c^*\aul)}g(c^*\aul).
\end{displaymath}
\begin{lem}\label{lem:ConsK}
Let  $\phi\in \BV_r(X,\mathbb{R})$. For every $n\geq 0$ and $m\geq
0$ there exists a constant  $K>0$  such that for all  $\dul\in X$
and $\aul,\bul\in\B_m(\dul,\delta)$ we have
\begin{displaymath}
\left|\mathscr{L}^n_{\phi}\mathds{1}(\bul)-\mathscr{L}^n_{\phi}\mathds{1}(\aul)\right|\leq
e^K\mathscr{L}^{n}_{\phi}\mathds{1}(\aul).%\varrho(\aul,\bul)^\alpha,
\end{displaymath}
\end{lem}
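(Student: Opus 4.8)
The plan is to bound $\mathcal{L}^n_{\varphi}\indicator{}(\sul)=\sum_{u^*\in\Z^n}e^{S_n(\varphi)(u^*\sul)}$ term by term: for each word $u^*\in\Z^n$ I would compare the summand $e^{S_n(\varphi)(u^*\sul)}$ with $e^{S_n(\varphi)(u^*\tul)}$ and then add up, all the series in sight being absolutely convergent thanks to the summability hypothesis \eqref{sumable} (which makes $\mathcal{L}^n_{\varphi}\indicator{}$ a bounded function). For $n=0$ there is nothing to do, since $\mathcal{L}^0_{\varphi}\indicator{}\equiv 1$; so fix $n\geq 1$.

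The first real step is to control $\mathcal{W}_r(\varphi_{n,u^*})$ \emph{uniformly} over all $u^*\in\Z^n$, where $\varphi_{n,u^*}(\sul)=S_n(\varphi)(u^*\sul)$ as in Lemma~\ref{lem:varrho}. Put $\delta'=\min\{\delta,\delta/C_E\}$. By part~2 of Lemma~\ref{TresLemas} one has $u^*\B_m(\wul,\delta')\subseteq\B_{m+n}(u^*\wul,\delta)$, so for $\sul,\tul\in\B_m(\wul,\delta')$ the estimate \eqref{phi_nc} gives
\[
|\varphi_{n,u^*}(\sul)-\varphi_{n,u^*}(\tul)|=|S_n(\varphi)(u^*\sul)-S_n(\varphi)(u^*\tul)|\leq \mathcal{W}_r(\varphi)\,\tfrac{r^n}{1-r}\,r^m,
\]
that is, $\varphi_{n,u^*}$ is $(\delta',r)$-weakly H\"older with constant at most $\mathcal{W}_r(\varphi)/(1-r)$; passing from $\delta'$-balls to $\delta$-balls by Proposition~\ref{weakly} only multiplies this constant by a factor depending on $\delta,\delta_0,C_E,\lambda_E,\ell,n_0$ and not on $u^*$ (nor on $n$). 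Call the resulting bound $K$, so that $\mathcal{W}_r(\varphi_{n,u^*})\leq K$ for every $u^*\in\Z^n$.

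Now fix $\wul\in X$ and $\sul,\tul\in\B_m(\wul,\delta)$. For each $u^*$ we obtain $|\varphi_{n,u^*}(\sul)-\varphi_{n,u^*}(\tul)|\leq \mathcal{W}_r(\varphi_{n,u^*})\,r^m\leq K$, and writing $e^a-e^b=e^b(e^{a-b}-1)$ with $|a-b|\leq K$ (this is precisely the elementary computation in the proof of Lemma~\ref{lem:eq5}, which uses only $|a-b|\leq K$ and is therefore valid for $m=0$ as well) gives
\[
\big|e^{S_n(\varphi)(u^*\sul)}-e^{S_n(\varphi)(u^*\tul)}\big|\leq (e^{K}-1)\,e^{S_n(\varphi)(u^*\tul)}\leq e^{K}\,e^{S_n(\varphi)(u^*\tul)}.
\]
Summing over $u^*\in\Z^n$ and applying the triangle inequality to the convergent series,
\begin{align*}
\big|\mathcal{L}^n_{\varphi}\indicator{}(\sul)-\mathcal{L}^n_{\varphi}\indicator{}(\tul)\big|
&\leq \sum_{u^*\in\Z^n}\big|e^{S_n(\varphi)(u^*\sul)}-e^{S_n(\varphi)(u^*\tul)}\big|\\
&\leq e^{K}\sum_{u^*\in\Z^n}e^{S_n(\varphi)(u^*\tul)}=e^{K}\,\mathcal{L}^n_{\varphi}\indicator{}(\tul),
\end{align*}
which is the assertion.

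The only point that is not purely formal is the uniform bound $\mathcal{W}_r(\varphi_{n,u^*})\leq K$ of the second paragraph, namely the fact that the functions $\varphi_{n,u^*}$ have weakly H\"older constants bounded uniformly in $u^*\in\Z^n$; the quantitative estimates of Lemma~\ref{lem:varrho} together with Proposition~\ref{weakly} are exactly what delivers this, and they even yield a $K$ independent of $n$ and $m$, which is stronger than what the statement requires. Everything else is a termwise application of the exponential estimate of Lemma~\ref{lem:eq5} followed by summation.
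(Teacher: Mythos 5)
Your proof is correct and follows essentially the same route as the paper's: a termwise application of the exponential estimate of Lemma~\ref{lem:eq5} followed by summation over $u^*\in\Z^n$. You are in fact more careful than the paper's own proof, which sets $K=\mathcal{W}_r(\varphi_{n,u^*})$ without addressing that this a priori depends on the word $u^*$ being summed over; your uniform bound $\mathcal{W}_r(\varphi_{n,u^*})\leq K$ obtained from Lemma~\ref{lem:varrho} and Proposition~\ref{weakly} supplies exactly the missing uniformity.
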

\begin{proof}
Let $n\geq 0$, $c^*\in \Z^n$ and let  $K=w_r(\phi_{n,c^*})$ be the
constant in  Lemma~\ref{lem:eq5}. Then for $\aul,\bul\in
\B_m(\dul,\delta)$ we have
  $$|e^{S_n \phi (c^*\bul)}-e^{S_n \phi (c^*\aul)}|
\leq e^K\left (e^{S_n \phi (c^*\bul)}\right).$$ Therefore,
 \begin{eqnarray*}
 |\mathscr{L}^n_{\phi}\mathds{1}(\bul)-\mathscr{L}^n_{\phi}\mathds{1}(\aul)|\leq\left|\sum_{c^*\in \Z^n}e^{S_n \phi (c^*\bul)}-
 \sum_{c^*\in \Z^n}e^{S_n \phi (c^*\aul)}\right|\\
\qquad\quad  \qquad\qquad\quad \leq\sum_{c^*\in
 \Z^n}\left|e^{S_n \phi (c^*\bul)}-e^{S_n \phi (c^*\aul)}\right|\\
\quad \quad\qquad\qquad\qquad  \leq e^K\sum_{c^*\in
 \Z^n}e^{S_n \phi (c^*\aul)}
 \leq e^K\mathscr{L}^n_{\phi}\mathds{1}(\aul).
 \end{eqnarray*}
 \end{proof}
\begin{lem}\label{eq:consK_}
For every potential $\phi\in \BV_r(X,\R)$  that is summable and
bounded on balls, and for every $R>0$ there exists $M_{\phi,R}>0$
such that
 for every $n\geq 0$ and  for all $\aul,\bul\in B(\underline{0},R)$ we have
 \begin{displaymath}
\mathscr{L}^n_{\phi}\mathds{1}(\aul)\leq
M_{\phi,R}\,\mathscr{L}^n_{\phi}\mathds{1}(\bul).
\end{displaymath}
\end{lem}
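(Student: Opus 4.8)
The plan is to bootstrap from the local comparison estimate of Lemma~\ref{lem:ConsK} using a chain (path) argument inside the ball $B(\zul,R)$, together with the topological mixing property to handle the fact that $B(\zul,R)$ is not contained in a single $\B_0$-ball. First I would fix $R>0$ and invoke Proposition~\ref{lem mix}: there exists $n_1=n_1(R)\geq 1$ such that for every $\sul\in B(\zul,R)$ we have $\sigma^{n_1}(B(\sul,\delta_0))\supset B(\zul,R)$. The idea is that $B(\zul,R)$ can be covered by finitely many $\B_0$-balls of radius $\delta$ (since $\overline{B}(\zul,R)$ is compact in $Z$ by Proposition~\ref{Properties2}(b), hence $B(\zul,R)$ is totally bounded), say $B(\zul,R)\subseteq \bigcup_{i=1}^{p}\B_0(\wul^{(i)},\delta)$. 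On overlapping balls Lemma~\ref{lem:ConsK} gives $\mathcal{L}^n_\varphi\indicator{}(\sul)\leq e^{K}\mathcal{L}^n_\varphi\indicator{}(\tul)$ with a constant $K$ that, crucially, depends only on $\mathcal{W}_r(\varphi_{n,u^*})$; I would need to check (using Lemma~\ref{lem:varrho}, part~1, which bounds $V_{m+n}(S_n\varphi)$ uniformly in $n$) that $\sup_{n,u^*}\mathcal{W}_r(\varphi_{n,u^*})<\infty$, so that $K$ can be taken independent of $n$. Chaining through at most $p$ overlapping balls then yields $\mathcal{L}^n_\varphi\indicator{}(\sul)\leq e^{pK}\mathcal{L}^n_\varphi\indicator{}(\tul)$ whenever $\sul,\tul$ lie in balls that can be connected within the cover; since $B(\zul,R)$ is connected in the relevant sense one gets a uniform bound over all of $B(\zul,R)$.

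The subtlety is that a priori two points $\sul,\tul\in B(\zul,R)$ need not be joined by a $\B_0(\cdot,\delta)$-chain staying inside $B(\zul,R)$, and the chain in Proposition~\ref{path} or Lemma~\ref{lem:path} connects points only within a common $\B_0$-ball, not within a ball $B(\zul,R)$ of arbitrary radius. To circumvent this I would instead argue as follows: given $\sul,\tul\in B(\zul,R)$, use topological mixing (the corollary to Proposition~\ref{lem mix}) to find $m$ and a word $u^*\in\Z^m$ with $u^*\sul$ and $u^*\tul$ both lying in a single small ball $\B_0(\vul,\delta)$; more directly, since $\sigma^{n_1}(B(\sul,\delta_0))\supset B(\zul,R)\ni\sigma^{n_1}\tul$... — actually the cleanest route is the finite-cover argument above combined with the observation that each pair of points in $B(\zul,R)$ can be joined, since $B(\zul,R)$ (being a ball in a connected-enough space) has a cover by overlapping $\delta$-balls whose nerve is connected. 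If that connectivity is not immediate, I would fall back on: for each $i$, pick a base point $\sul^{(i)}\in\B_0(\wul^{(i)},\delta)\cap B(\zul,R)$, note by summability that $\mathcal{L}^n_\varphi\indicator{}$ is finite and positive everywhere, and use a compactness/finiteness argument to bound the ratios $\mathcal{L}^n_\varphi\indicator{}(\sul^{(i)})/\mathcal{L}^n_\varphi\indicator{}(\sul^{(j)})$ uniformly in $n$ — which in turn reduces, via the transfer-operator identity $\mathcal{L}^{n+k}_\varphi\indicator{}=\mathcal{L}^n_\varphi(\mathcal{L}^k_\varphi\indicator{})$ and Proposition~\ref{lem mix}, to the local estimate.

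The main obstacle, then, is producing a chain between two arbitrary points of $B(\zul,R)$ that stays controlled, i.e. upgrading the purely local path lemma (Proposition~\ref{path}) to a statement valid on the macroscopic ball $B(\zul,R)$; once that is in hand, the estimate is a finite product of factors $e^{K}$ from Lemma~\ref{lem:ConsK}, with the number of factors bounded by the (finite, $R$-dependent) number of balls needed to cover $\overline{B}(\zul,R)$, giving $L_{\varphi,R}:=e^{pK}$. I expect the proof in the paper handles this by combining Proposition~\ref{lem mix} (to push everything into a common small ball after finitely many iterations, absorbing the discrepancy via the transfer-operator composition law) with Lemma~\ref{lem:ConsK}; the uniformity of $K$ in $n$ is the one quantitative point that must be verified carefully, and it follows from part~1 of Lemma~\ref{lem:varrho} since $\mathcal{W}_r(\varphi_{n,u^*})\leq \mathcal{W}_r(\varphi)/(1-r)$ independently of $n$ and $u^*$.
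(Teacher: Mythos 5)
Your primary route --- covering $B(\zul,R)$ by finitely many $\B_0(\cdot,\delta)$-balls and chaining Lemma~\ref{lem:ConsK} through overlapping balls --- does not work here, and the reason is exactly the subtlety you flag yourself: the nerve of such a cover has no reason to be connected. The metric $\varrho$ is induced from the Euclidean metric on $\mathcal{EP}$, which the paper notes is totally disconnected, so $B(\zul,R)$ is nowhere near ``connected enough''; worse, each $\B_0(\uul,\delta)$ is by definition contained in a single cylinder $[u_0]$, so two points of $B(\zul,R)$ with different zeroth symbols can never be joined by a chain of sets on which Lemma~\ref{lem:ConsK} applies. The covering/chaining argument must therefore be abandoned, not patched.

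Your fallback is the right idea and is in fact what the paper does, but as written it stops exactly where the work begins. The missing mechanism is: by Proposition~\ref{lem mix} there is $n_0=n_0(R)$ such that for $\sul,\tul\in B(\zul,R)$ one can choose a single preimage $\uul\in B(\sul,\delta_0)$ with $\sigma^{n_0}(\uul)=\tul$. One then sandwiches $\mathcal{L}^{n+n_0}_\varphi\indicator{}(\tul)$ from below by $e^{S_{n_0}(\varphi)(\uul)}\,\mathcal{L}^{n}_\varphi\indicator{}(\uul)$ (keeping only the $n$-preimages passing through $\uul$) and from above by $\|\mathcal{L}_\varphi\indicator{}\|_\infty^{n_0}\,\mathcal{L}^{n}_\varphi\indicator{}(\tul)$ (using summability), and applies Lemma~\ref{lem:ConsK} exactly once to pass from $\uul$ to the nearby $\sul$. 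This gives $\mathcal{L}^n_\varphi\indicator{}(\sul)\leq (1+e^K)\,e^{-S_{n_0}(\varphi)(\uul)}\,\|\mathcal{L}_\varphi\indicator{}\|_\infty^{n_0}\,\mathcal{L}^n_\varphi\indicator{}(\tul)$, and the hypothesis that $\varphi$ is bounded on balls --- which your sketch never actually invokes --- is precisely what bounds $\sup\{e^{-S_{n_0}(\varphi)(\uul)}:\uul\in B(\zul,R+\delta_0)\}$ and hence yields $L_{\varphi,R}$. Your observation that the constant $K$ in Lemma~\ref{lem:ConsK} can be taken uniform in $n$ and $u^*$ via Lemma~\ref{lem:varrho}(1) is correct and worth making explicit, but no chain of such estimates is needed: one application suffices once the mixing step has replaced $\tul$ by its preimage $\uul$ close to $\sul$.
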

 \begin{proof}
Let $R>0$. From  $(\Conn)$ we have there exists $n_0=n_0(R)\geq 1$
such that for all $\aul,\bul\in B(\zul,R)$
 there exists $\cul\in \B(\aul,\delta_0)$ such
that $\sigma^{n_0}(\cul)=\bul$. Therefore, we have
\begin{eqnarray*}
\mathscr{L}^{n+n_0}_\phi\mathds{1}(\bul)=\sum_{\dul:\sigma^{n+n_0}(\dul)=\bul}e^{\phi_{n+n_0}(\dul)}=
\sum_{\dul:\sigma^{n+n_0}(\dul)=\bul}
e^{\phi_{n}(\dul)}.e^{\phi_{n_0}(\sigma^{n}\dul)}\\
\qquad\qquad \qquad\qquad\qquad\qquad
=\sum_{\eul:\sigma^{n_0}(\eul)=\bul}\sum_
{\dul:\sigma^{n}(\dul)=\eul}e^{S_n \phi (\dul)}.e^{\phi_{n_0}(\eul)}
\\\qquad\qquad\qquad \qquad \qquad\qquad\qquad \geq\sum_{\dul:\sigma^{n}(\dul)=\cul}e^{S_n \phi (\dul)}e^{\phi_{n_0}(\cul)}=e^{\phi_{n_0}(\cul)}\mathscr{L}^n_\phi\mathds{1}(\cul).
\end{eqnarray*}
 Observe that
\begin{eqnarray*}
\mathscr{L}^{n+n_0}_\phi\mathds{1}(\bul)=\sum_{\dul:\sigma^{n+n_0}(\dul)=\bul}e^{\phi_{n+n_0}(\dul)}=
\sum_{\dul:\sigma^{n+n_0}(\dul)=\bul}
e^{\phi_{n_0}(\dul)}.e^{\phi_{n}(\sigma^{n_0}\dul)}\\
\qquad\qquad\qquad\qquad\qquad=\sum_{\eul:\sigma^{n}(\eul)=\bul}\,
\, \sum_
{\dul:\sigma^{n_0}(\dul)=\eul}e^{\phi_{n_0}(\dul)}.e^{\phi_{n}(\sigma^{n_0}\dul)}
\\ \qquad\qquad \qquad\qquad \qquad\qquad \leq
\sum_{\eul:\sigma^{n}(\eul)=\bul}\left(e^{\phi_{n}(\eul)}\sum_{\dul:\sigma^{n_0}(\dul)=\eul}e^{\phi_{n_0}(\dul)}\right)
\\ \qquad \qquad\qquad\qquad\qquad\qquad\qquad\leq
\|\mathscr{L}_\phi\mathds{1}\|^{n_0}_\infty\mathscr{L}_\phi^n\mathds{1}(\bul).
\end{eqnarray*}
By Lemma~\ref{lem:ConsK} we have, for every $n\geq 0$,
$$\mathscr{L}_\phi^n\mathds{1}(\aul)\leq(1+e^K)
\mathscr{L}_\phi^n\mathds{1}(\cul).$$ Therefore,
\begin{multline*}
\mathscr{L}^{n}_\phi\mathds{1}(\aul)\leq
(1+e^K)e^{-\phi_{n_0}(\cul)}\mathscr{L}^{n+n_0}_\phi\mathds{1}(\bul)
\\\leq
(1+e^K)e^{-\phi_{n_0}(\cul)}\|\mathscr{L}_\phi\mathds{1}\|^{n_0}_\infty\mathscr{L}_\phi^n\mathds{1}(\bul)\\
\qquad\qquad\qquad\qquad\qquad\leq
(1+e^K)M_{\phi,R}\|\mathscr{L}_\phi\mathds{1}\|^{n_0}_\infty\mathscr{L}_\phi^n\mathds{1}(\bul),
\end{multline*}
where $M_{\phi,R}=\displaystyle\sup_{\cul\in B(\zul,R+\delta_0)}
e^{-\phi_{n_0}(\cul)}$, which is bounded by hypothesis.
 \end{proof}
\begin{cor} Let $\phi\in \BV_r(X,\R)$, then we have that $$\limsup_{n\rightarrow \infty}\frac{1}{n}\log
 \mathscr{L}_{\phi}^n\mathds{1}(\aul)$$ is independent of\, $\aul\in X$.
\end{cor}
\begin{proof}
Let $\aul,\bul\in X$, and set
$R=\max\{\varrho(\zul,\aul),\varrho(\zul,\bul)\}+1$. So, let
$M=M_{\phi,R}$  as in Lemma~\ref{eq:consK_}. It is such that for
all $n\geq 0$ we have
 \begin{displaymath}
 \mathscr{L}^n_{\phi}\mathds{1}(\aul)\leq
 M\mathscr{L}^n_{\phi}\mathds{1}(\bul).
 \end{displaymath}
 Hence
 $$\limsup_{n\rightarrow\infty}\frac{1}{n}\log  \mathscr{L}^n_{\phi}\mathds{1}(\aul)\leq
 \limsup_{n\rightarrow\infty}\frac{1}{n}\log \mathscr{L}^n_{\phi}\mathds{1}(\bul).$$
Thus the  corollary follows immediately.
 \end{proof}
We will show  that the sequence $\frac{1}{n} \log
\mathscr{L}^n_{\phi}\mathds{1}(\aul)$
 with $ \aul\in X $
  actually converges and its limit is precisely
$P(\phi).$
\subsection{The existence
of conformal measures} \label{Conf.Meas.}
 Throughout this section we consider the following notation. For any
$a^*=a_0 \cdots a_{n-1}\in \Z^n$, the cylinder $[a^*]$ is defined
by
$$[a^*]:=\{\bul\in X: b_i=a_i, 0\leq i\leq n-1\}.$$

For topologically mixing Markov shift spaces  with infinitely  many symbols, the  the existence of
conformal measures  is
 not very 
immediate, see for instance~\cite{MU01}. Due our space $X$ is a bit more complicated due the lack of compactness and since fall out of the framework developed by \cite{MU01}, we have  to impose on $X$ further assumptions to   state  the existence of
conformal measures. 
\begin{itemize}
\item[-]Assume that the transformation
 $\sigma:X\to
X$ can be continuously extended to
$\overline{\sigma}:\overline{X}^\varrho\to
\overline{X}^{\varrho}$, where $\X^{\varrho}$ is the completion of
$X$ with respect to the metric $\varrho$. \item[-]
 For all $R>0$, we denote by $\overline{B}(\underline{0},R)$ the
closed ball in $\X^{\varrho}$ centered in $\underline{0}$ \, and
radius $R$. Namely
$$\overline{B}(\underline{0},R):=\{\bul\in \overline{X}^\varrho:
\varrho(\underline{0},\bul)\leq R\},$$
 and assume that this ball $\overline{B}(\underline{0},R)$ is compact in $\X^{\varrho}$
\item[-]
 We also  assume  that
for every $R>0$ and for every $\bul\in \X^{\varrho}\sms X$ there
exists $N\geq 1$ such that for $n\geq N$, we have
$\overline{\sigma}^n(\bul)\in \X^{\varrho}\sms
\overline{B}(\underline{0},R)$. If it does not lead to
misunderstanding we will frequently  denote $\overline{\sigma}$
simply by $\sigma$. \item[-] Given $R>0$ and $k$ be an integer
number. Let
 \begin{quote}
$[k,R]:=[k]\cap \overline{X}^\varrho\backslash
\overline{B}(\underline{0},R)=\{\aul\in [k]\cap
\overline{X}^\varrho:\varrho(\underline{0},\aul)>~R\}$.
\end{quote}
 And given a potential $\phi\in
\BV_r(X,\mathbb{R})$, we assume that $\phi$ satisfies
\begin{equation*}%\label{eq:Cond.Potential1}
\sum\limits_{k\in \Z}e^{\sup \phi |_{[k ]}}<\infty.
\end{equation*}
\begin{equation*}%\label{eq:Cond.Potential2}
\sum\limits_{k\in\Z}e^{\sup\phi |_{[k,R]}}\rightarrow 0,\textrm{
when } \,R\rightarrow \infty.
\end{equation*}
\item[-] Finally we assume that $\phi$ has a continuous extension
on $\X^{\varrho}$.
\end{itemize}

\medskip

Let us recall what  a conformal measure  means. Consider a
measurable endomorphism $T:Y\rightarrow Y$ on a measurable space
$(Y,\mathscr{F})$ and a measurable non-negative function $g$ on
$Y.$ A measure $m$ on $(Y,\mathscr{F})$ is called $g$-conformal
for $T$ on $Y$ if for all  measurable set $A$ which $T(A)$ is
measurable and $T|_{A}$ is invertible we have
\begin{equation}\label{eq:Conformal}
m(T(A))=\int_A g dm.
\end{equation}
Observe that (\ref{eq:Conformal}) implies that $m\circ T$ is
absolutely continuous with respect to $m$ on the $\sigma$-algebra
$\mathscr{F}\cap A$, for every set $A\in\mathscr{F}$ such that
$T:A\rightarrow~T(A)$ is a measurable isomorphism. In this case,
the equality in~(\ref{eq:Conformal})is equivalent to the fact that
the
 corresponding
Radon-Nikodym derivative $\frac{d (m\circ T)|_A}{dm|_A}$ is equal
 to $g|_A$.
 %(see~\cite{DenUrb}).

\medskip

 Let $C(X,\R)^*$ be the dual space of $C(X,\R)$, $\phi\in
 \BV_r(X,\R)$ and
 $\mathscr{L}^*_\phi:C(X,\R)^*\rightarrow C(X,\R)^*$ the dual
 Ruelle transfer operator defined by
 $$
 \mathscr{L}^*_\phi(\nu)(g)= \nu(\mathscr{L}_\phi(g)).
 $$
 For every $N\geq 1 $ we have that  $\Sigma_N^+$ is a compact  metric
 space in $X$. So the
\emph{Schauder-~Tychonoff }
 fixed point theorem
 %given in~\cite{DunSch} 
 guarantees that exists a
fixed point $\nu_{_N}$ (supported  on  $\Sigma^+_N$) of the map
$$\nu\rightarrow(\mathscr{L}_{\phi}^*(\nu)(\uno))^{-1}
\mathscr{L}^*_\phi(\nu).$$ That is
$$(\mathscr{L}_{\phi}^*\nu_{_N}(\uno))^{-1}
\mathscr{L}^*_\phi\nu_{_N}=\nu_{_N}.$$ Following the classical
thermodynamic formalism for compact spaces, It is known that for
each $N$  the fixed point $\nu_{_N}$ is a
$e^{P_{\Top}(\phi|_{\Sigma^+_N})-\phi|_{\Sigma^+_N}}$-conformal
measure for $\sigma|_{\Sigma^+_N}$, see~
\cite{PU10}. It means  that  for each  $N$  we have
 \begin{equation}
\nu_{_N}(\sigma([k,R]_N))=e^{P_{\Top}(\phi|_{\Sigma^+_N})}
\int_{[k,R]_N}e^{-\phi|_{_{\Sigma^+_N}}}d\nu_{_N}(\aul),
\end{equation}
where  $[k,R]_N= \Sigma^+_N\cap [k,R] $ such that
$\sigma|_{_{[k,R]_N}}$ is invertible
\begin{rem}\label{rem:eingevalue}
Since for every  $N\geq 1$ we have
  $ P_{N+1}(\phi)\geq
  P_{N}(\phi)$,
and  $  \lim_{N\rightarrow \infty}
P_{\Top}(\phi|_{\Sigma^+_N})=P(\phi) $, we have that
  $\lim_{N\rightarrow\infty}e^{P_{\Top}(\phi|_{\Sigma^+_N})}$ exists and is equal to
  $e^{P(\phi)}$. We will denote    $e^{P(\phi)}$ by $\lambda.$
\end{rem}
 Since our goal in this section is to construct conformal measures  without
 the condition of compactness,
 we  first show that the family $\{\nu_{_N}\}_{N\in \N}$  is tight  and then to
apply Prokhorov's  Theorem. We recall  that \emph{tightness} means
that
\begin{quote}
for all $\varepsilon>0$ there exists a compact set $K\subset
\X^{\varrho}$ such
  that for all $N$ we have $\nu_{_N}(\X^{\varrho}\backslash K)<\varepsilon$.
\end{quote}
\begin{prop}
  The sequence of measures $\{\nu_{_N}\}_{N\in \N}$ is \emph{tight}.
\end{prop}
\begin{proof}
Since  for each $N$ the measure $\nu_{_N}$ is
$e^{P_{\Top}(\phi|_{\Sigma^+_N})-\phi|_{\Sigma^+_N}}$-conformal,
we have
  \begin{eqnarray*}
    \nu_{_N}(\sigma([k,R]))=e^{P_{\Top}(\phi|_{\Sigma^+_N})}\int_{[k,R]}e^{-\phi}d\nu_{_N}(\aul)\\
\qquad\qquad\qquad\qquad
    \geq
    e^{P_{\Top}(\phi|_{\Sigma^+_N})}\nu_{_N}([k,R])e^{-\sup\phi|_{[k,R]}}.
  \end{eqnarray*}
 Hence
$$\nu_{_N}([k,R])\leq e^{-P_{\Top}(\phi|_{\Sigma^+_N})}\nu_{_N}(\sigma([k,R]))e^{\sup\phi|_{[k,R]}}
\leq e^{-P_{\Top}(\phi|_{\Sigma^+_N})}e^{\sup\phi|_{[k,R]}}.$$
  Furthermore  observe that
  \begin{multline}\label{eq:100}
\nu_{_N}(X\sms\overline{B}(0,R))=\nu_{_N}\left(\bigcup_{k\in
\Z}[k,R]\right)\\ \leq \sum_{k\in \Z}\nu_{_N}([k,R])\leq
    e^{-P_{\Top}(\phi|_{\Sigma^+_N})}\sum_{k\in \Z}\
    e^{\sup\phi|_{[k,R]}}
  \end{multline}
 From (\ref{eq:Cond.Potential2}) we have that  the last term tends
  to  zero when $R$ tents to infinity.
  Therefore from  the compactness  of  the  closed ball
  $ \overline{B}(0,R)$
 in $\X^{\varrho}$ the
  tightness of $\{\nu_{_N}\}_{N\geq 1}$ is proved.
\end{proof}
The Prokhorov's theorem 
%(see~\cite{Bil})
allows us relate the
tightness of measures to weak convergence in the space of
probability measures.
\begin{teo}[Prokhorov] If $P$ is a Polish space, that means  a
complete metrizable and separable space, and  $\mathcal{M}(P)$ is
the space of all  Borel
 probability measures in $P,$ then every
\emph{tight} family measures from $\mathcal{M}(P)$ is a
pre-compact subset of $\mathcal{M}(P)$.
\end{teo}
 Therefore, for the  sequence of measures $\{\nu_{_N}\}_{N\geq 1}$
 we have,
there exists a subsequence $ \{\nu_{_{N_i}}\}_{i\geq 1}$  which
converges in  the weak topology to some probability measure $\nu$.
It follows  in particular that  for every  Borel set $A$ such that
$\nu(\partial{A})=0$ and for every   bounded continuous
 function $g$ with bounded support  we have
\begin{equation}
\lim_{i\rightarrow\infty}\int_A g d\nu_{_{N_i}}=\int_A g d\nu.
\end{equation}
Let
%Define
\begin{displaymath}
  X_{\rad}:=\{\aul\in \overline{X}^\varrho: \omega_\sigma(\aul)\cap X\neq \emptyset\}.
\end{displaymath}
 For every $N\geq 1$, let  $\overline{B}(0,N)$  be the closed ball  in $\X^{\varrho}$
and for  $k$ an integer number  let $\overline{[k]}$ be  the
closure of  $[k].$ Thus we consider  a sequence of $A_N$, where
$$A_N:=\overline{B}(0,N)\cap \bigcup_{k=-N}^{N}(\overline{[k]}).$$
For every $N\geq 1$ we have $A_N\subset A_{N+1}$ and
$\bigcup_{N=1}^{\infty} A_N= \X^{\varrho}.$
\begin{teo}\label{conformalmeasure}
The measure $\nu$ is $ e^{P(\phi)-\phi}$-conformal and
$\nu(X_{\rad})=1$.
\end{teo}
\begin{proof}
Note that   for each $N\geq 1$  the measure  $\nu_{_N}$ is
$e^{P_{\Top}(\phi|_{\Sigma^+_N})-\phi|_{\Sigma^+_N}}$-conformal
for $\sigma|_{\Sigma^+_N}$ but not for $\sigma$ defined on
$\X^{\varrho}$. However,  if  $N$  large  enough
 then for every  $A\subset A_N$
 such that $\sigma|_{A}$ is one-to-one, we have
  \begin{displaymath}
    \nu_{_N}(\sigma(A))=e^{P_{\Top}(\phi|_{\Sigma^+_N})}\int_A e^{-\phi}
    d\nu_{_N}.
  \end{displaymath}
To verify  this, first  we prove that
\begin{equation}\label{Ec:sigma}
\sigma(A)\cap\Sigma^+_N=\sigma(A\cap\Sigma^+_N).
\end{equation}
Observe  that $\sigma(A\cap\Sigma^+_N)\subseteq
\sigma(A)\cap\sigma(\Sigma^+_N) \subseteq\sigma(A)\cap\Sigma^+_N.$
To get the contrary   inclusion, let
 $\aul\in\sigma(A)\cap\Sigma^+_N, $ then
there exists $\bul \in A$ such that $\aul=\sigma(\bul)$. We prove
that  $\bul\in \Sigma^+_N$.
 Let  $c \in \Z$  a word
such that  $c \aul=\bul$.  Then
 since $\bul\in A$ and $A\subset A_N$  we have that  there exists $k\in \{-N,\cdots, N\}$
 such that $c \aul\in \overline{[k]}$,
 then
 we have
$$ \B_1(c\aul,\delta)\cap[k]\neq \emptyset.$$
Thus  by part $1$ of Lemma~\ref{TresLemas} we have that
$\B_1(c\aul,\delta)\subseteq c\B_0(\aul,\delta)$ and   hence
$$c\B_0(\aul,\delta)\cap[k]\neq \emptyset.$$
Therefore $c\in \{-N,\cdots, N\}$ and  so
 we obtain $\bul\in \Sigma^+_N.$  Using (\ref{Ec:sigma})  we can write
\begin{eqnarray*}
\nu_{_N}(\sigma(A))=
\nu_{_N}(\sigma(A)\cap\Sigma^+_N)\\
\quad\quad\quad\quad\quad= \nu_{_N}(\sigma(A\cap\Sigma^+_N))
\\
\quad\quad\quad\quad\quad=
e^{P_{\Top}(\phi|_{\Sigma^+_N})}\int_{A\cap\Sigma^+_N} e^{-\phi}
    d\nu_{_N}\\
\quad\quad\quad\quad\quad =\nu_{_N}(\sigma(A))=
e^{P_{\Top}(\phi|_{\Sigma^+_N})}\int_A e^{-\phi}
    d\nu_{_N}
 \end{eqnarray*}
 And since the sequence  $\{\nu_{_{N_i}}\}$ converges weakly to $\nu$ we
  have that  for every
 Borel set $A$ such that  $\nu(\partial A)=0$ it satisfies
$\nu_{_{N_i}}(A)\rightarrow \nu(A).$ In particular
  this holds for every bounded Borel A such that $\nu(\partial A)=0$
  and $\nu(\partial \sigma(A))=0.$
  Then   \begin{displaymath}
  \nu(\sigma(A))=\lim_{i\rightarrow\infty}\nu_{_{N_i}}(\sigma(A))=
  \lim_{i\rightarrow\infty}e^{P_{\Top}(\phi|_{\Sigma^+_N})}\int_A e^{-\phi}
  d\nu_{_{N_i}}=\lambda\displaystyle\int_A e^{-\phi}d\nu.
  \end{displaymath}
 Take an arbitrary bounded Borel set $A$ such that $\sigma|_A$ is
 injective.
   We have   that $\Sing(\sigma:\X^{\varrho}\rightarrow\X^{\varrho})=\emptyset$ and $\nu(\X^{\varrho})=1$. Then   we obtain that  $\nu$ is a
   conformal measure.
   %(see~\cite{DenUrb}).

 To prove that  the measure $\nu$ is supported in $X_{\rad}, $ note that by the inequality
 (\ref{eq:Cond.Potential2}), there exists $R>0$ such that
   \begin{equation}\label{eq:110}
     \lambda^{-1}\sum_{k\in \Z}e^{\phi|_{[k,R]}}<1/2.
   \end{equation}
   Let
   \begin{displaymath}
     \X^{\varrho}(R,n):=\{\aul\in \X^{\varrho}:
     (\overline{\sigma})^k(\aul)\in \X^{\varrho}\sms B(\underline{0},R)\textrm{ for }
     k=0,\ldots,n-1\}.
   \end{displaymath}
   Then
   \begin{eqnarray*}
     \nu(\X^{\varrho} (R,n))\geq \nu\big(\overline{\sigma}(\X^{\varrho}(R,n+1))\cap [k])\big)\\
     =\lambda\int_{\X^{\varrho} (R,n+1))\cap[k]}e^{-\phi}d\nu\geq
     \lambda \nu(\X^{\varrho} (R,n+1))\cap [k])e^{-\sup\phi|_{\X^{\varrho} (R,n+1))\cap [k]}}.
   \end{eqnarray*}
   Hence using (\ref{eq:110}), we get that
   \begin{displaymath}
     \nu(\X^{\varrho} (R,n))\leq \lambda^{-1}\sum_{k\in \Z}
    e^{\sup\phi|_{[k,R]}}\nu(\X^{\varrho} (R,n-1)),
   \end{displaymath}
   and then
   \begin{displaymath}
     \nu(\X^{\varrho}(R,n))\leq (1/2)^n.
   \end{displaymath}
   Thus $\nu(X_{\rad})=1$.
\end{proof}
\begin{cor}
  There exists a measure $\nu$ which is $e^{P(\phi)-\phi}$-conformal for
  $\sigma:\overline{X}^\varrho\rightarrow \overline{X}^\varrho$ and $\nu(X_{\rad})=1$.
\end{cor}
For the rest of this section  we  assume that $\phi\in
B_{r}(X,\mathbb{R})$ is \emph{summable} and \emph{bounded on
balls} see~(\ref{sumable}) and (\ref{balls}).

We say that  $\phi$ is \emph{rapidly decreasing} if it satisfies.
 \begin{equation*}%\label{decreasing}
\lim_{R\rightarrow\infty}\sup_{\aul\in
\overline{X}^\varrho\backslash
B(\zul,R)}\big(\mathscr{L}_\phi\mathds{1}(\aul)\big)=0.
\end{equation*}
We consider the normalized transfer operator
$\widehat{\mathscr{L}}_\phi= e^{-P(\phi)}\mathscr{L}_\phi.$
\begin{prop}\label{Prop:pressure}
Let $\phi$ be a rapidly decreasing  potential. The limit
$\lim_{n\rightarrow\infty}\frac{1}{n}\log
\mathscr{L}^{n}_\phi\mathds{1}(\aul)$, with  $\aul\in
\overline{X}^\varrho$ exists, and
$$P(\phi)=\lim_{n\rightarrow\infty}\frac{1}{n}\log
\mathscr{L}^{n}_\phi\mathds{1}(\aul).$$
\end{prop}
\begin{proof}
To prove this proposition is enough  to  prove that there exists
$L>0$ and, for all $R>0$ there exists $\ell_R>0$
 such that for all $n\geq 1$ and all $\aul\in B(0,R)$, we have
\begin{equation}\label{inequality}
 \ell_R\leq\widehat{\mathscr{L}}^{n}_\phi\mathds{1}({\aul})\leq
L.
\end{equation}

 First, we prove the right hand side inequality.  Since
$\phi$ is rapidly decreasing  %(see (\ref{decreasing})),
 we have  that  there exists  sufficiently large $R_0>0$  that
\begin{equation}\label{eq:lessthanone}
\sup\left\{\widehat{\mathscr{L}}_\phi\mathds{1}({\aul}):
\aul\in\overline{X}^\varrho\backslash B(\zul,R_0) \right\}\leq 1.
\end{equation}
We will show by induction that for every $n\geq 0, $
\begin{equation}\label{eq:lessthanL}
\|\widehat{\mathscr{L}}^{n}_\phi\mathds{1}\|_\infty\leq
\frac{M_{\phi,R_0}}{\nu(B(\zul,R_0))}:=L,
 \end{equation}
where $M_{\phi,R_0}$ is the constant coming from
Lemma~\ref{eq:consK_} with $\mathscr{L}_\phi$ replaced by the
operator $\widehat{\mathscr{L}}_\phi$.

  To see this observe that  for all $n\geq 0$  it follows
\begin{equation}%\label{eq:n-inequality}
\|\widehat{\mathscr{L}}^n_\phi\mathds{1}\|_{\infty}\leq
e^{-nP(\phi)}\|\mathscr{L}_\phi\mathds{1}\|^n_{\infty}.
\end{equation}
Thus for $n=0$ is clear, since
$$\|\widehat{\mathscr{L}}^0_\phi\mathds{1}\|_{\infty}\leq
1\leq M_{\phi,R_0}\leq\frac{M_{\phi,R_0}}{\nu(B(0,R_0))}.
$$
Suppose that
$$\|\widehat{\mathscr{L}}^n_\phi\mathds{1}\|_{\infty}\leq\frac{M_{\phi,R_0}}{\nu(B(0,R_0))}.
$$
 We will now  prove this inequality  for $n+1$.
Using (\ref{decreasing}) and the fact that  $\phi$ is summable
 we have,
 there exists $\bul\in \overline{X}^\varrho$ such that
\begin{displaymath}
\|\widehat{\mathscr{L}}^{n+1}_\phi\mathds{1}\|_{\infty}=
\widehat{\mathscr{L}}^{n+1}_\phi\mathds{1}(\bul).
\end{displaymath}
If  $\bul\in \overline{X}^\varrho\backslash B(0,R_0)$,  then by
(\ref{eq:lessthanone}) and (\ref{eq:lessthanL})
\begin{eqnarray*}
\|\widehat{\mathscr{L}}^{n+1}_\phi\mathds{1}\|_\infty=
\widehat{\mathscr{L}}^{n+1}_\phi\mathds{1}(\bul)=
\widehat{\mathscr{L}}_\phi(\widehat{\mathscr{L}}^{n}_\phi\mathds{1})(\bul)\\
\qquad\qquad\qquad\leq
e^{-P(\phi)}\sum_{\cul\in\sigma^{-1}(\bul)}e^{\phi(\cul)}\widehat{\mathscr{L}}^{n}_\phi\mathds{1}(\cul)
\\
\qquad\qquad\qquad\qquad\leq
\|\widehat{\mathscr{L}}^{n}_\phi\mathds{1}\|_\infty\widehat{\mathscr{L}}_\phi
\mathds{1}(\bul)
\\
\qquad\qquad\qquad\qquad\qquad\leq
 \frac{M_{\phi,R_0}}{\nu(B(0,R_0))}.\\
\end{eqnarray*}
Otherwise, if $\bul\in B(0,R_0)$,  then by
 Lemma~\ref{eq:consK_} and the fact that  for all $n\geq 0$,
$\widehat{\mathscr{L}}^{n}_{\phi}\mathds{1}\nu=\nu$
 we follow
\begin{eqnarray*}
1= \int d\nu = \int
\widehat{\mathscr{L}}^{n+1}_\phi\mathds{1}d\nu\\
\qquad\qquad\qquad\geq
\int_{B(\zul,R_0)}\widehat{\mathscr{L}}^{n+1}_\phi\mathds{1}d\nu
\geq
M^{-1}_{\phi,R_0}\nu(B(\zul,R_0))\|\widehat{\mathscr{L}}^{n+1}_\phi\mathds{1}\|
_{\infty}.
\end{eqnarray*}
 Thus the  right hand inequality is proved.

Now we  prove  the other inequality in~(\ref{inequality}). Let
$R_1>R_0$ be  such that
 $\nu(\overline{X}^\varrho\backslash B(\zul,R_1))\leq 1/4L$.
Let $R>R_1$ then we have
\begin{eqnarray*}
1=\int\widehat{\mathscr{L}}^n_\phi\mathds{1} d\nu \leq
\int_{B(\zul,R)}\widehat{\mathscr{L}}^{n}_\phi\mathds{1}
d\nu+\int_{\overline{X}^\varrho\backslash
 B(\zul,R)}\widehat{\mathscr{L}}^{n}_\phi\mathds{1} d\nu
\\
\qquad\qquad\qquad\qquad\leq
\int_{B(\zul,R)}\widehat{\mathscr{L}}^{n}_\phi\mathds{1} d\nu+ L
\,\nu(\overline{X}^\varrho\backslash  B(\zul,R))
\\
\qquad\qquad\qquad\qquad\qquad\leq\int_{B(\zul,R)}\widehat{\mathscr{L}}^{n}_\phi\mathds{1}
d\nu+1/4.
\end{eqnarray*}

Hence for any $n\geq 0$ there exists $\aul_n\in B(\zul,R_0)$
 such that $\widehat{\mathscr{L}}^{n}_{\phi}\mathds{1}(\aul_n)
\geq 3/4$. If $\cul\in B(\zul,R)$ is any other point, then by the
 Lemma~\ref{eq:consK_} we have
 $$3/4\leq\mathscr{L}^{n}_\phi\mathds{1}(\aul_n)\leq
M_{\phi,R}\mathscr{L}^{n}_\phi\mathds{1}(\cul).$$
 Thus, for each $R\geq R_1 $ the inequality holds with
$l_R=3M^{-1}_{\phi,R}/4$,  if $R<R_1$ then  we just put
$l_R:=l_{R_1}$.
 Therefore $\lim_{n\rightarrow\infty}\frac{1}{n}\log
\mathscr{L}^n_{\phi}\mathds{1}(\aul)$ exists and it is equal to
$P(\phi)$.
\end{proof}
\subsection{The existence of an invariant probability Gibbs measure}
\label{Gibbs Measures}
In this section we prove for weakly H\"older potentials
 the existence  of an invariant measure $\mu$
which is absolutely continuous with respect to the conformal
measure $\nu$ (given in Section~\ref{Conf.Meas.}) and we also
prove that $\mu$ is a Gibbs measure. We give the definition of
Gibbs measures of weakly H\"older potentials, with a definition
appropriately adapted
 for the transcendental functions.   A probability  measure $\eta$ on $X$  and $\mathscr{F}$ the
Borel $\sigma$-algebra of
  sets   is called   a \emph{Gibbs measure} for  a  weakly H\"older potential  $\phi
  $  if  there exist $\Pe\in \mathbb{R}$,  $C\geq 1$ such that  for all
 $\aul \in X$, there exists  $M=M(\aul)$ such that  for
all $n\geq 1$ and $c^*\in \Z^n$ we have
\begin{equation}\label{Gibbs}
C^{-1}M(\aul)\leq \frac{\eta(c^*\B_0(\aul,\delta
))}{\exp(S_n \phi (c^*\aul)- n\Pe)}\leq C.
\end{equation}
If additionally  $\nu$ is $\sigma$-invariant,
 we call $\nu$  \emph{ an invariant Gibbs  measure}.
\begin{lem}\label{positive} Let  $\nu$ be the
 conformal  measure
 from Theorem~\ref{conformalmeasure}. Then for every
 $\aul \in X$ and for every  $r>0$,
 we have $\nu(B(\aul,r))>0.$
\end{lem}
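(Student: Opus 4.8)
Fix $\sul\in X$ and $\varepsilon>0$; the aim is a strictly positive lower bound for $\nu(B(\sul,\varepsilon))$. I would start from the remark that, since $\nu$ is a probability measure and $X=\bigcup_{R>0}B(\zul,R)$ is an increasing union of open sets, there is $R_0>0$ with $\nu(B(\zul,R_0))>0$, and hence $\nu(B(\zul,R))>0$ for every $R\ge R_0$.

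The first step is to place, inside $B(\sul,\varepsilon)$, the image of a ball of positive $\nu$-measure under a \emph{single} inverse branch. Rerunning the argument that proves density of $\bigcup_N\Sigma_N$ in $X$ — namely Proposition~\ref{lem mix} combined with the exponentially shrinking preimages property of Proposition~\ref{ExpShr} — I would obtain an integer $n\ge 1$, a radius $R\ge R_0$, and a word $w^*=w_0\cdots w_{n-1}\in\Z^n$ with
\[
A:=w^*B(\zul,R)\subseteq B(\sul,\varepsilon).
\]
Since the first $n$ coordinates of every point of $A$ are prescribed, $\sigma^n$ is injective on $A$ and $\sigma^n(A)=B(\zul,R)$, which has positive $\nu$-measure because $R\ge R_0$.

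Next I would invoke conformality. Iterating the defining relation of $\nu$ (which is $\vartheta e^{-\varphi}$-conformal for $\sigma\colon Z\to Z$, with $\Sing(\sigma\colon Z\to Z)=\emptyset$) shows that $\nu$ is $\vartheta^{\,n}e^{-S_n(\varphi)}$-conformal for $\sigma^n$, so injectivity of $\sigma^n$ on $A$ gives
\begin{multline*}
0<\nu(B(\zul,R))=\nu(\sigma^n(A))=\vartheta^{\,n}\int_A e^{-S_n(\varphi)}\,d\nu\\
\le\vartheta^{\,n}\Big(\sup_A e^{-S_n(\varphi)}\Big)\nu(A)\le\vartheta^{\,n}\Big(\sup_A e^{-S_n(\varphi)}\Big)\nu(B(\sul,\varepsilon)).
\end{multline*}
The remaining point is to bound $S_n(\varphi)=\sum_{k=0}^{n-1}\varphi\circ\sigma^k$ from below on $A$. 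For $0\le k\le n-1$ one has $\sigma^k(A)=(w_k\cdots w_{n-1})B(\zul,R)$, which under the coding $H$ is $g_{w_k\cdots w_{n-1}}\big(H(B(\zul,R))\big)$: the image of the bounded piece $H(B(\zul,R))$ of the Julia set — a set whose closure in $\C$ is compact and disjoint from $\overline D\cup\alpha$ — under a composition of finitely many holomorphic inverse branches, hence itself bounded in $\C$. Thus $\sigma^k(A)\subseteq B(\zul,R_k)$ for some finite $R_k$, and since $k$ runs over finitely many values and $\varphi$ is bounded on balls by~(\ref{bbonballs}), $\sup_A|S_n(\varphi)|\le\sum_{k=0}^{n-1}\sup_{B(\zul,R_k)}|\varphi|<\infty$. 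Rearranging the displayed estimate then gives $\nu(B(\sul,\varepsilon))\ge\vartheta^{-n}\big(\sup_A e^{-S_n(\varphi)}\big)^{-1}\nu(B(\zul,R))>0$.

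I expect the main obstacle to be the bookkeeping that confines everything to one inverse branch: topological mixing only yields $\sigma^n(B(\sul,\varepsilon))\supseteq B(\zul,R)$, on which $\sigma^n$ is far from injective, so the conformality identity cannot be applied directly and the branch $w^*$ must be singled out exactly as in the density argument; simultaneously one must check that the Birkhoff sum $S_n(\varphi)$ does not blow up along $A,\sigma(A),\dots,\sigma^{n-1}(A)$, which is where hyperbolicity — finitely many holomorphic contracting branches map bounded sets to bounded sets — and the hypothesis~(\ref{bbonballs}) that $\varphi$ is bounded on balls enter.
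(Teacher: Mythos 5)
Your overall strategy --- transporting the positive measure of a ball $B(\zul,R)$ into $B(\sul,\varepsilon)$ through an injective branch of $\sigma^n$ and then applying conformality --- is the same as the paper's, and your handling of the conformality identity and of the finiteness of $S_n(\varphi)$ on the transported set would be adequate. The gap is in your first step: the existence of a single word $w^*\in\Z^n$ with $w^*B(\zul,R)\subseteq B(\sul,\varepsilon)$. What Proposition~\ref{lem mix} combined with Proposition~\ref{ExpShr} actually delivers is the containment $B(\zul,R)\subseteq\sigma^{n}\bigl(B(\sul,\varepsilon)\bigr)$, i.e.\ every point of $B(\zul,R)$ has \emph{some} $\sigma^n$-preimage in $B(\sul,\varepsilon)$; different points may admit such preimages only along different branches, so no single $w^*$ need work. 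You cannot close this with Proposition~\ref{ExpShr} alone, since that estimate contracts only pairs at $\varrho$-distance less than $\delta_0$, while $B(\zul,R)$ has diameter up to $2R\gg\delta_0$; one would need a chaining argument in the spirit of Proposition~\ref{path} together with control of where $w^*\zul$ lands, none of which you supply. (The sentence in the density proof asserting $w^*B(\zul,R)\subset B(\sul,\epsilon)$ is itself stronger than what that argument establishes or needs --- there only the single point $w^*\zul\in B(\sul,\epsilon)$ is used --- so ``rerunning the density argument'' does not produce your inclusion.)

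The paper sidesteps the issue with a measure-theoretic pigeonhole: write $B(\sul,\varepsilon)=\bigcup_{u^*\in\Z^n}B(\sul,\varepsilon)\cap[u^*]$; since the $\sigma^n$-images of these countably many pieces cover $B(\zul,R)$, which has positive measure, some piece $A=B(\sul,\varepsilon)\cap[u^*]$ satisfies $\nu(\sigma^n(A))>0$, and $\sigma^n$ is injective on $A$ because the first $n$ symbols are frozen. Conformality then gives $0<\nu(\sigma^n(A))\le\vartheta^{n}e^{-\inf S_n(\varphi)|_A}\,\nu(A)\le\vartheta^{n}e^{-\inf S_n(\varphi)|_A}\,\nu(B(\sul,\varepsilon))$, which is your final estimate with $A$ in place of $w^*B(\zul,R)$ (the paper controls $\inf S_n(\varphi)|_A$ by the weak H\"older distortion constant $C'$ rather than by boundedness on balls, but either works). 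If you replace your single-branch set by this pigeonholed cylinder piece, the rest of your argument goes through unchanged.
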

\begin{proof}
Let $\aul\in X$ and $r>0$. Since
 $X=\bigcup_{R\geq 0}B(\zul,R)$
 we have  there exists $R>0$ such that
$\nu(B(0,R))>0.$ By Lemma~\ref{aul_r}  there exists $n_0>0 $ such
that
$$B(\sigma^{n_0}\aul,\delta_0)\subseteq
\sigma^{n_0}(B(\aul,r)).$$ Take $R_1>0$ such that  $R_1\geq R$ and
$\sigma^{n_0}\aul\in B(\zul, R_1)$. By $(\Conn)$ property
 we have  there exists
 $n_1$ such that
$$ B(\zul,R_1)\subseteq \sigma^{n_1}(B(\sigma^{n_0}\aul,\delta_0))
\subseteq
 \sigma^{n_1+n_0}(B(\aul,r)).$$
 Thus  for $n=n_0+n_1$  we have
$B(\zul,R)\subseteq \sigma^{n}(B(\aul,r)).$

Since $B(\aul,r)= \bigcup_{c^*\in \Z^n}B(\aul,r)\cap [c^*]$. Then
\begin{displaymath}
\qquad\qquad\qquad \sigma^{n}(\bigcup_{c^*\in \Z^n} B(\aul,r)\cap
[c^*])\supset
 B(0,R).
\end{displaymath}
  Then there exists $c^*\in \Z^n$ such that
 $$\nu(\sigma^{n}(B(\aul,r)\cap[c^*]))>0.$$
 Therefore, since $\sigma^{n}|_{B(\aul,r)\cap[c^*]}$
 is one-to-one, we have
\begin{eqnarray*}
0<\nu(\sigma^{n}(B(\aul,r)\cap [c^*]))=
  \int_{B(\aul,r)\cap[c^*]}
  e^{n P(\phi)-S_n \phi }d\nu\\
\qquad\qquad\qquad\qquad\qquad\qquad \leq e^{n P(\phi)}
e^{-\inf{\phi_{n}|_ {B(\aul,r)\cap[c^*]}}}
\nu(B(\aul,r)\cap[c^*])\\
\qquad\qquad\qquad\qquad\qquad\qquad\qquad \leq e^{n
P(\phi)}e^{-S_n \phi (\aul)}C\nu (B(\aul,r)),
\end{eqnarray*}
 where $C$ is the positive constant $\exp\left(\frac{w_r(\phi)}
{1-r}\right)$ from Lemma~\ref{lem:varrho}. Thus
$$ \nu(B(\aul,r))\geq \nu(\sigma^{n}B(\aul,r)\cap [c^*])
e^{-n P(\phi)} e^{S_n \phi (\aul)}C>0.
$$
\end{proof}
\begin{prop}
%Let $\sigma: X\rightarrow X$ be the shift transformation and l
Let $\phi\in \BV_r(X,\R)$ satisfying the  conditions $(\ie)$
$(\iie)$, $(\iiie),$ $(\ive)$ and $(\ve)$. Then
  there exists an invariant measure $\mu$
 that is
absolutely continuous with respect to
  $\nu$ and it is a Gibbs measure  for $\phi.$

\end{prop}
\begin{proof}
    Let $A\subset B(\aul,\delta)\subset B(\zul,R)$ be a Borel set.
    By the conformality of the measure $\nu$, we have that
    \begin{equation}
        \label{Bar:1}\nu(A)=
    \int_{c^*A} e^{nP(\phi)}e^{-S_n \phi }d\nu
    \end{equation}
    and by Lemma \ref{eq:consK_} and again the conformality, we also have
    \begin{equation}
        \label{Bar:2}
        \sum_{c^*\in \Z^n}e^{-nP(\phi)} e^{S_n \phi (c^*\aul)}\geq
        M_{\phi,R}^{-1}e^{-nP(\phi)} \int_{B(\zul,R)}\mathscr{L}^n\mathds{1}
        d\nu=M_{\phi,R}^{-1}\nu(B(\zul,R)).
    \end{equation}
    Then, by (\ref{phi_nc}) and (\ref{Bar:1}),
    \begin{equation}
        \label{Bar:3}
        \nu(A)
        \geq e^{nP(\phi)}
        e^{-\sup(S_n \phi |_{c^*\B_0(\aul,\delta))}}
        \nu(c^*A)
        \geq e^{nP(\phi)} e^{-S_n \phi (c^*\aul)}
        C^{-1}\nu(c^*A)
    \end{equation}
    and
    \begin{equation}
        \label{Bar:4}
        \nu(A)
        \leq e^{nP(\phi)}
        e^{-\inf(S_n \phi |_{c^*\B_0(\aul,\delta)})}
        \nu(c^*A)
        \leq e^{nP(\phi)} e^{-S_n \phi (c^*\aul)}
        C\nu(c^*A).
    \end{equation}
    In particular
    \begin{displaymath}
        e^{-nP(\phi)} e^{S_n \phi (c^*\aul)}
        \leq \frac{C\nu(c^*B(\aul,\delta))}{\nu(B(\aul,\delta))}.
    \end{displaymath}
    Then by (\ref{Bar:3}), we have
    \begin{multline*}
        \nu(\sigma^{-n}(A))=\sum_{c^*\in \Z^n}\nu(c^*A)\leq \nu(A)
        C\sum_{c^*\in \Z^n}e^{-nP(\phi)} e^{S_n \phi (c^*\aul)}\\
        \leq \nu(A)C^2/\nu(B(\aul,\delta)),
     \end{multline*}
     and by (\ref{Bar:4}) and (\ref{Bar:2})
    \begin{multline*}
        \nu(\sigma^{-n}(A))=\sum_{c^*\in \Z^n}\nu(c^*A)\geq \nu(A)
        C^{-1}\sum_{c^*\in \Z^n}e^{-nP(\phi)} e^{S_n \phi (c^*\aul)}
        \\\geq \nu(A)(CM_{\phi,R})^{-1}\nu(B(\zul,R)).
     \end{multline*}
     Therefore,
     \begin{equation}
        \label{Bar:5}
        \nu(A)(CM_{\phi,R})^{-1}\nu(B(\zul,R))\leq \nu(\sigma^{-n}(A))
        \leq \nu(A)C^2/\nu(B(\aul,\delta))
     \end{equation}

    In the Banach space  of all bounded sequences of real numbers
    we consider  a continuous  linear functional $L$ called a
    \emph{Banach limit} $L~:~\ell^{\infty}\rightarrow~\mathbb{R}$
    (see~\cite{BK85}) such that
    \begin{enumerate}
    \item[-]$\li(\{\aul_{n}\}_{n\geq 1})=\li(\{\aul_{n+1}\}_{n\geq
        1})$.
    \item[-]$\liminf_{n\rightarrow\infty} a_n \leq \li(\{a_n\})\leq \limsup_{n\rightarrow \infty} a_n$
    \item[-]If $\{a_n\}$ converges then $\lim_{n\rightarrow \infty}
        a_n=\li(\{a_n\}).$
    \end{enumerate}
    Then let $\nu$ be a measure defined by formula
    $$\mu(A)= \li(\{\nu\circ\sigma^{-n}(A)\}_{n=0}^{\infty}).$$
    Since $\li(\{\aul_{n}\}_{n\geq 1})=\li(\{\aul_{n+1}\}_{n\geq 1})$,
    the measure is invariant. Moreover, since $\liminf_{n\rightarrow\infty} a_n \leq \li(\{a_n\})\leq \limsup_{n\rightarrow\infty} a_n$, by (\ref{Bar:5})
    we get that for any Borel set $A\subset B(\aul,\delta)\subset
    B(\zul,R)$, we have
    \begin{equation}\label{bar11}
\nu(A)(CM_{\phi,R})^{-1}\nu(B(\zul,R))\leq \mu(A)
        \leq \nu(A)C^2/\nu(B(\aul,\delta))
        \end{equation}

    Hence the measure $\mu$ is equivalent to the measure $\nu$.
    \end{proof}
%
%    By part $2$ of Lemma~\ref{TresLemas} and~(\ref{pordefi1}) we have that for every
%    $\aul\in X$, every integer $n\geq 0$, $c^*\in \Z^n$,
%    \begin{eqnarray*}
%        \sigma^n(c^*\B_0(\aul,\delta'))
%        \subseteq\sigma^n(\B_n(c^*\aul,\delta)) \subseteq
%        \B_0(\aul,\delta).
%    \end{eqnarray*}

\begin{prop}
The measures
 %conformal measure
 $\nu$ and $\mu $ are
 %  from
%Theorem~\ref{conformalmeasure} is a
Gibbs measure.
\end{prop}
\begin{proof}
  From (\ref{Bar:3}) and (\ref{Bar:4}) we  have
  \begin{equation}\label{bar12}
    C^{-1}\nu(B(\aul,\delta))\leq
\frac{\nu(c^*\B(\aul,\delta))}{\exp(S_n \phi (c^*\aul)-n
P(\phi))}\leq C.
  \end{equation}
  Using
%     Let us recall that $\delta'=\max\{\delta, \delta/C_{\ES} \}$.
%    By the conformality of the measure $\nu$, we have that
%    $$\nu(\B_0(\aul,\delta))=
%    \int_{c^*\B_0(\aul,\delta)} e^{nP(\phi)}e^{-S_n \phi }d\nu.$$
%    By part $2$ of Lemma~\ref{TresLemas} and~(\ref{pordefi1}) we have that for every
%    $\aul\in X$, every integer $n\geq 0$, $c^*\in \Z^n$,
%    \begin{eqnarray*}
%        \sigma^n(c^*\B_0(\aul,\delta'))
%        \subseteq\sigma^n(\B_n(c^*\aul,\delta)) \subseteq
%        \B_0(\aul,\delta).
%    \end{eqnarray*}
%
%    By Lemma~\ref{lem:varrho}  we have
%    \begin{multline*}
%        1\geq \nu(\B_0(\aul,\max\{\delta,C_{\ES}\delta\}))
%        =\int_{c^*\B_0(\aul,\max\{\delta,C_{\ES}\delta\})}e^{nP(\phi)}
%        e^{-S_n \phi }d\nu\\
%        \geq e^{nP(\phi)}
%        e^{-\sup{S_n \phi |_{c^*\B_0(\aul,\max\{\delta,C_{\ES}\delta\})}}}
%        \nu(c^*\B_0(\aul,\max\{\delta,C_{\ES}\delta\}))\\
%        \geq e^{nP(\phi)} e^{-S_n \phi (c^*\aul)}
%        C^{-1}\nu(c^*\B_0(\aul,\delta)).
%    \end{multline*}
%On the other hand,
% using again  Lemma~\ref{lem:varrho} and by
Lemma~\ref{positive} follows that   for every $\aul\in X$,
$\nu(B(\aul,\delta))>0$. Therefore  we get that $\nu$ is  a Gibbs
measure with $P=P(\phi)$, $C=\exp(\frac{w_r(\phi)}{1-r})$ and
$M(a)= \nu(B(\aul,\delta_0)).$
%\begin{eqnarray*}
%\nu(B(\aul,\delta_0))\leq \nu(\B_0(\aul,\max\{\delta,\delta
%C_{\ES}\}))\\
%\qquad\qquad\qquad
%=\int_{c^*\B_0(\aul,\max\{\delta,C_{\ES}\delta\})}e^{nP(\phi)}
%e^{-S_n \phi }d\nu\\
%\qquad\qquad\qquad\qquad
% \leq e^{nP(\phi)} e^{-S_n \phi (c^*\aul)}C\nu(c^*\B_0(\aul,\delta)).
%\end{eqnarray*}
%Therefore  we get
%\begin{displaymath}
%M(\aul) C^{-1}\leq
%\frac{\nu(a^*\B(\sigma^n\aul,\delta))}{\exp(S_n \phi (c^*\aul)-n
%P(\phi))}\leq C.
%\end{displaymath}
% That is $\nu$ is a Gibbs  measure with $M=\nu(B(\aul,\delta_0)).$

 We also have that from (\ref{bar11}) and (\ref{bar12})
$$
C^{-2} M_{\phi,R}^{-1} M(a)^2\leq
\frac{\mu(c^*\B(\aul,\delta))}{\exp(S_n \phi (c^*\aul)-n P(\phi))}
\leq C^3.
$$
Therefore $\mu$ is a Gibbs measure.
\section*{Acknowledgements}
The author would like to thank  the valuable suggestions and comments of Lasse Rempe and Gerardo Honorato. This was partially supported by 
   Fondecyt 1130341. 
\end{proof}%

\end{document}